\newtheorem{theorem}{Theorem}
\newtheorem{condition}[theorem]{Condition}
\newtheorem{definition}[theorem]{Definition}
\newtheorem{lemma}[theorem]{Lemma}
\newtheorem{construction}[theorem]{Construction}
\numberwithin{equation}{section}
\numberwithin{example}{section}
\numberwithin{theorem}{section}
\newenvironment{proof}[1][Proof]{\noindent\textbf{#1.} }{\ \rule{0.5em}{0.5em}}
\begin{document}

\title{Moderate deviations for recursive stochastic algorithms}
\author{Paul Dupuis\thanks{%
Research supported in part by the Department of Energy (DE-SCOO02413), the
National Science Foundation (DMS-1317199), and the Army Research Office
(W911NF-12-1-0222).} \ and Dane Johnson\thanks{%
Research supported in part by the Department of Energy (DE-SCOO02413).} \\
Division of Applied Mathematics \\
Brown University \\
Providence, RI 02912 }
\maketitle

\begin{abstract}
We prove a moderate deviation principle for the continuous time
interpolation of discrete time recursive stochastic processes. \ The methods
of proof are somewhat different from the corresponding large deviation
result, and in particular the proof of the upper bound is more complicated.
The results can be applied to the design of accelerated Monte Carlo
algorithms for certain problems, where schemes based on moderate deviations
are easier to construct and in certain situations provide performance
comparable to those based on large deviations.
\end{abstract}

\section{Introduction}

In this paper we consider $\mathbb{R}^{d}$-valued discrete time processes of
the form 
\begin{equation*}
X_{i+1}^{n}=X_{i}^{n}+\frac{1}{n}b(X_{i}^{n})+\frac{1}{n}\upsilon
_{i}(X_{i}^{n})\text{, }X_{0}^{n}=x_{0},
\end{equation*}%
where $\{\upsilon _{i}(\cdot )\}_{i\in \mathbb{N}_{0}}$ are zero mean random
independent and identically distributed (iid) vector fields, and focus on
their continuous time piecewise linear interpolations $\{X^{n}(t)\}_{0\leq
t\leq T}$ with $X^{n}(i/n)=X_{i}^{n}$ (see (\ref{eq:pw_linear}) for the
precise definition). \ Under certain conditions there is a law of large
number limit $X^{0}\in C([0,T]:\mathbb{R}^{d})$, and the large deviations of 
$X^{n}$ from this limit have been studied extensively (see, e.g., \cite%
{azerug,demzei,dupell4,frewen,gar}). Here we introduce a scaling $a(n)$
satisfying $a(n)\rightarrow 0$ and $a(n)\sqrt{n}\rightarrow \infty $, and
study the amplified difference between $X^{n}$ and its noiseless version $%
X^{n,0}$ (see Section 2 for the definition of $X^{n,0}$): 
\begin{equation*}
Y^{n}=a(n)\sqrt{n}(X^{n}-X^{n,0}).
\end{equation*}%
Under Condition \ref{cond:main} introduced below $\sup_{t\in \lbrack
0,T]}\left\Vert X^{0}(t)-X^{n,0}(t)\right\Vert \sim O(1/n)$, and hence this
will behave the same asymptotically as $a(n)\sqrt{n}(X^{n}-X^{0})$ \ We
demonstrate, under weaker conditions on the noise $\upsilon _{i}(\cdot )$
than are necessary when considering $X^{n}$, that $Y^{n}$ satisfies the
large deviation principle on $C([0,T]:\mathbb{R}^{d})$ with a
\textquotedblleft Gaussian\textquotedblright\ type rate function. \ As is
customary for this type of scaling, we refer to this as moderate deviations.

To demonstrate this result we prove the equivalent Laplace principle, which
involves evaluating limits of quantities of the form 
\begin{equation*}
a(n)^{2}\log E\left[ \exp \left\{ -\frac{1}{a(n)^{2}}F(Y^{n})\right\} \right]
\end{equation*}%
when $F$ is bounded and continuous. This is done by representing each of
these quantities in terms of a stochastic control problem, and then using
weak convergence methods as in \cite{dupell4}. Key results needed in this
approach are establishing tightness of controls and controlled processes,
and identifying their limits.

While one might expect the proof of this moderate deviations result to be
similar to the corresponding large deviations result, there are important
differences. For example, the tightness proof is significantly more
complicated in the case of moderate deviations than it is in the case of
large deviations. \ For large deviations one is able to establish an a
priori bound on certain relative entropy costs associated with any sequence
of nearly minimizing controls, and under this boundedness of the relative
entropy costs, the empirical measures of the controlled driving noises as
well as the controlled processes are tight. \ However, owing to the scaling
in moderate deviations, even with the information that the analogous
relative entropy costs decay like $O(1/a(n)^{2}n)$, tightness of the
empirical measures of the noises does not hold. \ Instead, one must consider
empirical measures of the conditional means of the noises, and additional
effort is required for the law of large numbers type result that shows that
the conditional means are adequate to determine the limit. \ This extra
difficulty arises for moderate deviations (even with the vanishing relative
entropy costs), because the noise itself is being amplified by $a(n)\sqrt{n}$%
.

A second way in which the proofs for large and moderate deviations differ is
in their treatment of degenerate noise, i.e., problems where the support of $%
\upsilon _{i}(\cdot )$ is not all of $\mathbb{R}^{d}$. This leads to
significant difficulties in the proof of the large deviation lower bound,
and requires a delicate and involved mollification argument. In contrast,
the proof in the setting of moderate deviations, though more involved than
the nondegenerate case, is much more straightforward.

As a potential application of these results we mention their usefulness in
the design and analysis of Monte Carlo schemes for events whose probability
is small but not very small. For such problems the performance of standard
Monte Carlo may not be adequate, especially if the quantity must be computed
for many different parameter settings, as in say an optimization problem.
Then accelerated Monte Carlo may be of interest, and as is well known such
schemes (e.g., importance sampling and splitting) benefit by using
information contained in the large deviation rate function as part of the
algorithm design (e.g., \cite{blaglyled, deadup, dupwan3, dupwan5}). \ In a
situation where one considers events of small but not too small probability
one may find the moderate deviation approximation both adequate and
relatively easy to apply, since moderate deviations lead to situations where
the objects needed to design an efficient scheme can be explicitly
constructed in terms of solutions to the linear-quadratic regulator. These
issues will be explored elsewhere.

The existing literature on moderate deviations considers various settings. \
Baldi \cite{bal2} considers the same scaling used here but with no state
dependence. \ For the empirical measure of a Markov chain, de Acosta \cite%
{dea3} and de Acosta and Chen \cite{chedea} prove lower and upper bounds,
respectively. \ Guillin \cite{gui} considers inhomogeneous functionals of a
\textquotedblleft fast\textquotedblright\ continuous time ergodic Markov
chain, and in \cite{gui2} this is extended to a small noise diffusion whose
coefficients depend on the \textquotedblleft fast\textquotedblright\ Markov
chain. \ There are also results for martingale differences such as Dembo 
\cite{dem2}, Gao \cite{gao}, and Djellout \cite{dje}. For various reasons,
the issues previously mentioned regarding the difficulties in the proof of
the upper bound and the simplification in the lower bound for degenerate
noise do not play a role in these papers.

The paper is organized as follows. \ Section 2 gives the statement of the
problem and notation. \ Section 3 contains the proof of tightness and the
characterization of limits, which account for most of the mathematical
difficulties, and are also the main results needed to prove the Laplace
principle. \ Sections 4 and 5 give the proofs of the upper and lower Laplace
bounds. Although all proofs are given for the time interval $[0,1]$, they
extend with only notational differences to $[0,T]$ for any $T\in (0,\infty )$%
.

\section{Background and Notation}

Let 
\begin{equation*}
X_{i+1}^{n}=X_{i}^{n}+\frac{1}{n}b(X_{i}^{n})+\frac{1}{n}\upsilon
_{i}(X_{i}^{n})\text{, }X_{0}^{n}=x_{0}
\end{equation*}%
where the $\{\upsilon _{i}(\cdot )\}_{i\in \mathbb{N}_{0}}$ are zero mean
iid vector fields with distribution given by the stochastic kernel $\mu _{x}$%
. Thus if $\mathcal{B}(\mathbb{R}^{d})$ is the Borel $\sigma $-algebra on $%
\mathbb{R}^{d}$, then $x\rightarrow \mu _{x}(B)$ is measurable for all $B\in 
\mathcal{B}(\mathbb{R}^{d})$, $\mu _{x}(\cdot )$ is a probability measure on 
$\mathcal{B}(\mathbb{R}^{d})$ for all $x\in \mathbb{R}^{d}$, and $P(\upsilon
_{i}(x)\in B)=\mu _{x}(B)$ for all $x\in \mathbb{R}^{d}$, $B\in \mathcal{B}(%
\mathbb{R}^{d})$ and $i\in \mathbb{N}_{0}$. Define 
\begin{equation*}
H_{c}(x,\alpha )\doteq \log \left( \dint_{\mathbb{R}^{d}}e^{\left\langle
y,\alpha \right\rangle }\mu _{x}(dy)\right)
\end{equation*}%
for $\alpha \in \mathbb{R}^{d}$. The subscript $c$ reflects the fact that
this log moment generating function uses the centered distribution $\mu _{x}$%
, rather than the usual $H(x,\alpha )=H_{c}(x,\alpha )+\left\langle \alpha
,b(x)\right\rangle $. We will use the following.

\begin{condition}
\label{cond:main}

\begin{itemize}
\item There exists $\lambda >0$ and $K_{\text{mgf}}<\infty $ such that 
\begin{equation}
\sup_{x\in \mathbb{R}^{d}}\sup_{\left\Vert \alpha \right\Vert \leq \lambda
}H_{c}(x,\alpha )\leq K_{\text{mgf}}\text{.}  \label{MGF Bound}
\end{equation}

\item $x\rightarrow \mu _{x}(dy)$ is continuous with respect to the topology
of weak convergence.

\item $b(x)$ is continuously differentiable, and the norm of both $b(x)$ and
its derivative are uniformly bounded by some constant $K_{b}<\infty $.
\end{itemize}
\end{condition}

Throughout this paper we let $\left\Vert \alpha \right\Vert
_{A}^{2}=\left\langle \alpha ,A\alpha \right\rangle $ for any $\alpha \in 
\mathbb{R}^{d}$ and symmetric, nonnegative definite matrix $A$. \ Define%
\begin{equation*}
A_{ij}(x)\doteq \dint_{\mathbb{R}^{d}}y_{i}y_{j}\mu _{x}(dy),
\end{equation*}%
and note that the weak continuity of $\mu _{x}$ with respect to $x$ and (\ref%
{MGF Bound}) ensure that $A\left( x\right) $ is continuous in $x$ and its
norm is uniformly bounded by some constant $K_{A}$. \ Note that 
\begin{equation*}
\frac{\partial H_{c}(x,0)}{\partial \alpha _{i}}=\dint_{\mathbb{R}%
^{d}}y_{i}\mu _{x}(dy)=0
\end{equation*}%
and%
\begin{equation*}
\frac{\partial ^{2}H_{c}(x,0)}{\partial \alpha _{i}\partial \alpha _{j}}%
=\dint_{\mathbb{R}^{d}}y_{i}y_{j}\mu _{x}(dy)=A_{ij}(x)
\end{equation*}%
for all $i,j\in \{1,\ldots ,d\}$ and $x\in \mathbb{R}^{d}$, and that $A(x)$
is nonnegative-definite and symmetric. \ For $x\in \mathbb{R}^{d}$ we can
therefore write 
\begin{equation*}
A(x)=Q(x)\Lambda (x)Q^{T}(x),
\end{equation*}%
where $Q(x)$ is an orthogonal matrix whose columns are the eigenvectors of $%
A(x)$ and $\Lambda (x)$ is the diagonal matrix consisting of the eigenvalues
of $A(x)$ in descending order. \ In what follows we define $\Lambda ^{-1}(x)$
to be the diagonal matrix with diagonal entries equal to the inverse of the
corresponding eigenvalue for the positive eigenvalues, and equal to $\infty $
for the zero eigenvalues. \ Then when we write%
\begin{equation}
\left\Vert \alpha \right\Vert _{A^{-1}(x)}^{2}=\left\Vert \alpha \right\Vert
_{Q(x)\Lambda ^{-1}(x)Q^{T}(x)}^{2},  \label{eq:DiffInvDecomp}
\end{equation}%
we mean a value of $\infty $ for $\alpha \in \mathbb{R}^{d}$ not in the
linear span of the eigenvectors corresponding to the positive eigenvalues,
and the standard value for vectors $\alpha \in \mathbb{R}^{d}$ in that
linear span. \ Assumption (\ref{MGF Bound}) implies there exists some $%
K_{DA}<\infty $ and $\lambda _{DA}\in (0,\lambda ]$ (independent of $x$)
such that%
\begin{equation}
\sup_{x\in \mathbb{R}^{d}}\sup_{\left\Vert \alpha \right\Vert \leq \lambda
_{DA}}\max_{i,j,k}\left\vert \frac{\partial ^{3}H_{c}(x,\alpha )}{\partial
\alpha _{i}\partial \alpha _{j}\partial \alpha _{k}}\right\vert \leq \frac{%
K_{DA}}{d^{3}},  \label{eq:H3rdDerivBnd}
\end{equation}%
and consequently for all $\left\Vert \alpha \right\Vert \leq \lambda _{DA}$
and all $x\in \mathbb{R}^{d}$ 
\begin{equation}
\frac{1}{2}\left\Vert \alpha \right\Vert _{A(x)}^{2}-\left\Vert \alpha
\right\Vert ^{3}K_{DA}\leq H_{c}(x,\alpha )\leq \frac{1}{2}\left\Vert \alpha
\right\Vert _{A(x)}^{2}+\left\Vert \alpha \right\Vert ^{3}K_{DA}\text{.}
\label{eq:H-bound}
\end{equation}

Define the continuous time linear interpolation of $X_{i}^{n}$ by $%
X^{n}(i/n)=X_{i}^{n}$ for $i=0,...,n$ and 
\begin{equation}
X^{n}(t)=(i+1-nt)X_{i}^{n}+(nt-i)X_{i+1}^{n}  \label{eq:pw_linear}
\end{equation}%
for $t\in (i/n,i/n+1/n)$. \ In addition, define 
\begin{equation*}
X_{i+1}^{n,0}=X_{i}^{n,0}+\frac{1}{n}b\left( X_{i}^{n,0}\right) \text{, }%
X_{0}^{n,0}=x_{0}
\end{equation*}%
and let $X^{n,0}(t)$ be the analogously defined continuous time linear
interpolation. \ Clearly $X^{n,0}(t)\rightarrow X^{0}(t)$ in $C([0,1]:%
\mathbb{R}^{d})$, where 
\begin{equation*}
X^{0}(t)=\int_{0}^{t}b(X^{0}(s))ds+x_{0}\text{.}
\end{equation*}%
Since $E\upsilon _{i}(x)=0$ for all $x\in \mathbb{R}^{d}$, we know that $%
X^{n}(t)\rightarrow X^{0}(t)$ in $C([0,1]:\mathbb{R}^{d})$ in probability. \
One can estimate probabilities for events involving paths outside the law of
large numbers limit $X^{0}$ by proving a large deviation principle and
finding the corresponding rate function. \ 

\begin{definition}
\bigskip Let $\{Z^{n}$, $n\in 
\mathbb{N}
\}$ be a sequence of random variables defined on a probability space $%
(\Omega ,\tciFourier ,P)$ and taking values in a Polish space $\mathcal{Z}$.
\ A function $I:\mathcal{Z}\rightarrow \lbrack 0,\infty ]$ is called a rate
function if for any $M<\infty $ the set $\{x:I(x)\leq M\}$ is compact in $%
\mathcal{Z}$. \ The sequence $\{Z^{n}\}$ satisfies the large deviation
principle on $\mathcal{Z}$\ with rate function $I$ and sequence $r(n)$ if
the following two conditions hold.

\begin{itemize}
\item Large Deviation Upper Bound: for each closed subset $F$ of $\mathcal{Z}
$\ 

\begin{equation*}
\limsup_{n\rightarrow \infty }r(n)\log P(Z^{n}\in F)\leq -\inf_{z\in F}I(z).
\end{equation*}

\item Large Deviation Lower Bound: for each open subset $G$ of $\mathcal{Z}$%
\ 

\begin{equation*}
\liminf_{n\rightarrow \infty }r(n)\log P(Z^{n}\in G)\geq -\inf_{z\in G}I(z).
\end{equation*}
\end{itemize}
\end{definition}

Under significantly stronger assumptions, including the assumption%
\begin{equation*}
\sup_{x\in \mathbb{R}^{d}}\sup_{\alpha \in \mathbb{R}^{d}}H_{c}(x,\alpha
)<\infty ,
\end{equation*}%
it has been shown that $X^{n}(t)$ satisfies the large deviation principle on 
$C([0,1]:\mathbb{R}^{d})$ with sequence $r(n)=1/n$ and rate function 
\begin{align*}
I_{L}(\phi )& =\inf \left\{ \int_{0}^{1}L_{c}(\phi (s),u(s))ds:\phi \left(
t\right) =x_{0}+\int_{0}^{t}b(\phi (s))ds\right. \\
& \quad \qquad \quad \left. +\int_{0}^{t}u(s)ds,t\in \lbrack 0,1]\right\} .
\end{align*}%
where 
\begin{equation*}
L_{c}(x,\beta )=\sup_{\alpha \in \mathbb{R}^{d}}\{\left\langle \alpha ,\beta
\right\rangle -H_{c}(x,\alpha )\}
\end{equation*}%
is the Legendre transform of $H_{c}(x,\alpha )$ \cite{dupell4, wen1, wen2,
wen3, wen4}.

Assume $a(n)$ satisfies 
\begin{equation}
a(n)\rightarrow 0\text{ and }a(n)\sqrt{n}\rightarrow \infty \text{.}
\label{a(n) behavior}
\end{equation}%
We define the rescaled difference 
\begin{equation*}
Y^{n}(t)=a(n)\sqrt{n}(X^{n}(t)-X^{n,0}(t)).
\end{equation*}%
As noted in the introduction, the result stated below also holds with the
interval $[0,1]$ replaced by $[0,T]$, $T\in (0,\infty )$. Let $D$ denote the
gradient operator.

\begin{theorem}
Assume Condition \ref{cond:main}. Then $\{Y^{n}\mathbb{\}}_{n\in \mathbb{N}}$
satisfies the large deviation principle on $C([0,1]:\mathbb{R}^{d})$ with
sequence $a(n)^{2}$ and rate function 
\begin{align*}
I_{M}(\phi )& =\inf \left\{ \frac{1}{2}\int_{0}^{1}\left\Vert
u(t)\right\Vert ^{2}dt:\phi (t)=\int_{0}^{t}Db(X^{0}(s))\phi (s)ds\right. \\
& \quad \qquad \quad \left. +\int_{0}^{t}A^{1/2}(X^{0}(s))u(s)ds,t\in
\lbrack 0,1]\right\} .
\end{align*}
\end{theorem}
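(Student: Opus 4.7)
The plan is to prove the equivalent Laplace principle: for every bounded continuous $F:C([0,1]:\mathbb{R}^d)\to\mathbb{R}$,
\begin{equation*}
\lim_{n\to\infty}-a(n)^2\log E\!\left[\exp\!\left\{-\tfrac{1}{a(n)^2}F(Y^n)\right\}\right]=\inf_{\phi}\bigl[F(\phi)+I_M(\phi)\bigr].
\end{equation*}
Compactness of the level sets of $I_M$ follows from Arzel\`a--Ascoli applied to the linear dynamics together with weak compactness of $L^2$-balls in $u$. To access the left-hand side I apply the variational (relative-entropy) representation for exponential functionals of iid sequences, rescaled by $a(n)^2$: it rewrites the left-hand side as
\begin{equation*}
\inf\ E\!\left[a(n)^2\sum_{i=0}^{n-1}R\bigl(\bar\mu^n_i\,\|\,\mu_{\bar X^n_i}\bigr)+F(\bar Y^n)\right],
\end{equation*}
where the infimum ranges over sequences $\{\bar\mu^n_i\}$ of conditional distributions (given the past) of controlled noises $\bar v_i$, $\bar X^n$ is built from the controlled recursion, and $\bar Y^n=a(n)\sqrt n(\bar X^n-X^{n,0})$.

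For the upper Laplace bound I would take near-minimizing controls. A bookkeeping of scales shows that near-minimality forces the conditional means $\bar w^n_i\doteq\int y\,\bar\mu^n_i(dy)$ to have typical size $O(1/(a(n)\sqrt n))$, so the natural objects are the rescaled sequences $w^n_i\doteq a(n)\sqrt n\,\bar w^n_i$. I form the empirical measures $L^n(dt\,dw)\doteq\frac1n\sum_i\delta_{(i/n,\,w^n_i)}$ on $[0,1]\times\mathbb{R}^d$ and establish their tightness by converting the entropy bound, via the quadratic lower bound in (\ref{eq:H-bound}), into uniform $L^2$-type control on $w^n_i$. Any weak subsequential limit decomposes as $\delta_{w(t)}(dw)\,dt$ for some $w\in L^2$, and the linearization of $b$ together with a law of large numbers for the martingale fluctuations of $\bar v_i-\bar w^n_i$ shows that $\bar Y^n$ converges weakly to $\bar\phi(t)=\int_0^tDb(X^0(s))\bar\phi(s)ds+\int_0^tw(s)ds$. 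Writing $w=A^{1/2}(X^0)u$ on the range of $A^{1/2}(X^0)$ (with infinite cost otherwise), the quadratic expansion of $H_c$ and Fatou's lemma yield $\liminf a(n)^2\sum_i R(\bar\mu^n_i\|\mu_{\bar X^n_i})\geq\tfrac12\int_0^1\|u(t)\|^2dt$, which combined with continuity of $F$ gives the Laplace upper bound.

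The principal obstacle, and the novelty over the large-deviation analysis, occurs in the weak-convergence step just described. Even after bounding the total entropy, the empirical measures of the controlled noises themselves need not be tight, because the noises are being amplified by $a(n)\sqrt n$ in $\bar Y^n$. One must therefore track only the conditional means and then separately prove that the martingale fluctuations $a(n)/\sqrt n\cdot(\bar v_i-\bar w^n_i)$ vanish in the limit. The third-derivative bound (\ref{eq:H3rdDerivBnd}) enters essentially here, forcing the cubic correction terms in the Legendre-dual expansion of the entropy to become negligible so that only the quadratic $\tfrac12\|u\|^2$ contribution survives.

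For the lower Laplace bound, given $\phi$ with $I_M(\phi)<\infty$ and a corresponding optimal $u$, I would construct explicit near-optimal controls by exponential tilting: set
\begin{equation*}
\bar\mu^n_i(dy)=\exp\{\langle\alpha^n_i,y\rangle-H_c(\bar X^n_i,\alpha^n_i)\}\,\mu_{\bar X^n_i}(dy)
\end{equation*}
with $\alpha^n_i=A^{-1/2}(X^0(i/n))u(i/n)/(a(n)\sqrt n)$, first mollifying $u$ to be bounded and piecewise constant and then removing the mollification by a density argument. By (\ref{eq:H-bound}) this tilt produces the correct limiting conditional mean $A^{1/2}(X^0(t))u(t)$, contributes entropy converging to $\tfrac12\int_0^1\|u(t)\|^2dt$, and---because $\alpha^n_i$ lies automatically in the range of $A^{1/2}(X^0)$---avoids the delicate mollification required in the large-deviations lower bound for degenerate noise, as anticipated in the introduction. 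Combining the two bounds completes the Laplace principle and hence the theorem.
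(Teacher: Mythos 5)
Your overall strategy matches the paper's: reduce to the Laplace principle, apply the relative-entropy representation, establish tightness of the empirical measures of the rescaled conditional means, prove a law-of-large-numbers statement to kill the martingale fluctuations, and use explicit tilted controls for the lower Laplace bound. The genuine gap is in your treatment of degenerate noise in the lower bound. You tilt with $\alpha^n_i = A^{-1/2}(X^0(i/n))u(i/n)/(a(n)\sqrt n)$ and claim that mollifying $u$ to be bounded, together with the observation that "$\alpha^n_i$ lies automatically in the range of $A^{1/2}(X^0)$," suffices. Neither closes the gap: when $A(X^0(s))$ has zero or very small eigenvalues, $A^{-1/2}(X^0(s))$ is infinite or unbounded, so even with $u$ bounded the norm $\|\alpha^n_i\|$ need not satisfy the constraint $\|\alpha^n_i\|\leq\lambda_{DA}$ under which the expansions (\ref{eq:H-bound}) and (\ref{Derivative Bound}) are valid; the optimal $u$ does lie in the closure of the range of $A^{1/2}(X^0)$, but this does not bound $A^{-1/2}(X^0)u$.

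The paper's fix is to truncate the pseudoinverse, not just $u$: it defines $A_K^{-1/2}(x)=Q(x)\Lambda_K^{-1/2}(x)Q^T(x)$ with the inverse eigenvalues capped at $K$, tilts with $\alpha^n_i=A_K^{-1/2}(X^0(s_i^n))u_K(s_i^n)/(a(n)\sqrt n)$ so that $\|\alpha^n_i\|\leq K^2/(a(n)\sqrt n)\leq\lambda_{DA}$ for $n$ large, shows the controlled process converges to the solution of $\dot\phi^{u,K}=Db(X^0)\phi^{u,K}+A(X^0)A_K^{-1/2}(X^0)u_K$ with entropy cost converging to $\tfrac12\int_0^1\|A_K^{-1/2}(X^0)u_K\|^2_{A(X^0)}\,ds$, and then sends $K\to\infty$ using $A^{1/2}A_K^{-1/2}u_K\to u$ in $L^2$ and the Lipschitz dependence (\ref{eq:L2ContPhi}) of $\phi$ on its forcing. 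A smaller mischaracterization: the third-derivative bound (\ref{eq:H3rdDerivBnd}) controls the errors in the mean and entropy of the tilted measures and in the quadratic lower bound on the costs (\ref{eqn:lower_bound_costs}); the law-of-large-numbers step for the martingale fluctuations (Theorem \ref{th:Wconv_in_prob}) is a separate argument that splits according to the size of the Radon--Nikodym derivative and invokes Doob's submartingale inequality, and does not use that bound.
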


$I_{M}$ is essentially the same as what one would obtain by using a linear
approximation around the law of large numbers limit $X^{0}$ of the dynamics
and a quadratic approximation of the costs in $I_{L}$. \ To prove the LDP,
it suffices to show the Laplace principle \cite[Theorem 1.2.3]{dupell4}%
\begin{align}
& \lim_{n\rightarrow \infty }-a(n)^{2}\log E\left[ e^{-\frac{1}{a(n)^{2}}%
F(Y^{n})}\right]  \notag \\
& \quad =\inf_{u\in L^{2}([0,1]:\mathbb{R}^{d})}\left\{ \frac{1}{2}%
\int_{0}^{1}\left\Vert u(s)\right\Vert ^{2}ds+F\left( \phi
^{A^{1/2}(X^{0})u}\right) \right\}  \label{eq:Laplace Principle}
\end{align}%
where%
\begin{equation}
\phi ^{u}\left( t\right) =\int_{0}^{t}Db(X^{0}(s))\phi
^{u}(s)ds+\int_{0}^{t}u(s)ds\text{.}  \label{phiDefined}
\end{equation}%
Note that 
\begin{equation*}
Y_{i+1}^{n}=Y_{i}^{n}+\frac{a(n)}{\sqrt{n}}\left(
b(X_{i}^{n})-b(X_{i}^{n,0})\right) +\frac{a(n)}{\sqrt{n}}\upsilon
_{i}(X_{i}^{n}),\quad Y_{0}^{n}=0
\end{equation*}

For $\eta ,\mu \in \mathcal{P}(\mathbb{R}^{d})$ [the set of probability
measures on $\mathcal{B}(\mathbb{R}^{d})$] , the relative entropy of $\eta $
with respect to $\mu $ is defined by 
\begin{equation*}
R(\left. \eta \right\Vert \mu )\doteq \int_{\mathbb{R}^{d}}\log \left( \frac{%
d\eta }{d\mu }(x)\right) \eta (dx)\in \lbrack 0,\infty ]
\end{equation*}%
if $\eta $ is absolutely continuous with respect to $\mu $, and $R(\left.
\eta \right\Vert \mu )\doteq \infty $ otherwise. For general properties of
relative entropy we refer to \cite[Section 1.4]{dupell4}. The variational
formula \cite[Proposition 1.4.2(a)]{dupell4} and chain rule \cite[Theorem
C.3.1]{dupell4}\ imply that%
\begin{equation}
-a(n)^{2}\log E\left[ e^{-\frac{1}{a(n)^{2}}F(Y^{n})}\right] =\inf_{\eta }E%
\left[ \dsum\limits_{i=0}^{n-1}a(n)^{2}R(\left. \eta _{i}\right\Vert \mu _{%
\bar{X}_{i}^{n}})+F(\bar{Y}^{n})\right]  \label{eq:rep}
\end{equation}%
for any bounded, continuous $F:C([0,1]:\mathbb{R}^{d})\rightarrow \mathbb{R}$%
. Here $\eta \in \mathcal{P}((\mathbb{R}^{d})^{n})$ is the joint
distribution of $(\bar{\upsilon}_{0},\ldots ,\bar{\upsilon}_{n-1})$, $\eta
_{i}(\cdot )$ is the conditional distribution on $\bar{\upsilon}_{i}$ given $%
(\bar{\upsilon}_{0},\ldots ,\bar{\upsilon}_{i-1})$,%
\begin{equation}
\bar{X}_{i+1}^{n}=\bar{X}_{i}^{n}+\frac{1}{n}b(\bar{X}_{i}^{n})+\frac{1}{n}%
\bar{\upsilon}_{i},\quad \bar{X}_{0}^{n}=x_{0},  \label{eq:defXbar}
\end{equation}%
\begin{equation}
\bar{Y}_{i+1}^{n}=\bar{Y}_{i}^{n}+\frac{a(n)}{\sqrt{n}}\left( b(\bar{X}%
_{i}^{n})-b(X_{i}^{n,0})\right) +\frac{a(n)}{\sqrt{n}}\bar{\upsilon}%
_{i},\quad \bar{Y}_{0}^{n}=0  \label{eq:defYbar}
\end{equation}%
and, similar to (\ref{eq:pw_linear}), $\bar{X}^{n}(t)$ and $\bar{Y}^{n}(t)$
are the continuous time linear interpolations of $\{\bar{X}%
_{i}^{n}\}_{i=0,\ldots ,n}$ and $\{\bar{Y}_{i}^{n}\}_{i=0,\ldots ,n}$. \
Note that $\eta _{i}$ depends on past values of the noise, but we suppress
this dependence in the notation. \ We will prove (\ref{eq:Laplace Principle}%
) by proving the lower bound%
\begin{align}
& \liminf_{n\rightarrow \infty }-a(n)^{2}\log E\left[ e^{-\frac{1}{a(n)^{2}}%
F(Y^{n})}\right]  \notag \\
& \quad \geq \inf_{u\in L^{2}([0,1]:\mathbb{R}^{d})}\left\{ \frac{1}{2}%
\int_{0}^{1}\left\Vert u(s)\right\Vert ^{2}ds+F\left( \phi
^{A^{1/2}(X^{0})u}\right) \right\}  \label{eq:Laplace Lower}
\end{align}%
and the upper bound 
\begin{align}
& \limsup_{n\rightarrow \infty }-a(n)^{2}\log E\left[ e^{-\frac{1}{a(n)^{2}}%
F(Y^{n})}\right]  \notag \\
& \quad \leq \inf_{u\in L^{2}([0,1]:\mathbb{R}^{d})}\left\{ \frac{1}{2}%
\int_{0}^{1}\left\Vert u(s)\right\Vert ^{2}ds+F\left( \phi
^{A^{1/2}(X^{0})u}\right) \right\} \text{.}  \label{eq: Laplace Upper}
\end{align}%
We will use a tightness and weak convergence result in the proofs of both of
these bounds, but first establish notation used in the rest of the paper. \ 

\begin{construction}
\label{tightnessNotation}\emph{Given a sequence of measures \{}$\eta
^{n}\}_{n\in \mathbb{N}}$\emph{\ with each }$\eta ^{n}\in \mathcal{P}((%
\mathbb{R}^{d})^{n})$\emph{, define the following. \ Let }$(\bar{\upsilon}%
_{0}^{n},\ldots ,\bar{\upsilon}_{n-1}^{n})$\emph{\ be random variables with
distribution }$\eta ^{n}$\emph{, and define }$\{\bar{X}_{i}^{n}\}_{i=0,%
\ldots ,n}$\emph{\ and }$\{\bar{Y}_{i}^{n}\}_{i=0,\ldots ,n}$\emph{\ by (\ref%
{eq:defXbar}) and (\ref{eq:defYbar}). Let }%
\begin{equation*}
\bar{X}^{n}(t)\doteq (i+1-nt)\bar{X}_{i}^{n}+(nt-i)\bar{X}_{i+1}^{n}
\end{equation*}%
\emph{and }%
\begin{equation*}
\bar{Y}^{n}(t)\doteq (i+1-nt)\bar{Y}_{i}^{n}+(nt-i)\bar{Y}_{i+1}^{n}
\end{equation*}%
\emph{for }$t\in \lbrack i/n,i/n+1/n],i=0,\ldots n-1$\emph{\ be their
continuous time linear interpolations. \ Define the conditional means of the
noises }%
\begin{equation*}
w^{n}(t)\doteq \dint_{\mathbb{R}^{d}}y\eta _{i}^{n}(dy)\text{ for }t\in %
\left[ \frac{i}{n},\frac{i+1}{n}\right) ,
\end{equation*}%
\emph{the amplified conditional means}%
\begin{equation*}
\hat{w}^{n}(t)\doteq a(n)\sqrt{n}w^{n}(t),
\end{equation*}%
\emph{and random measures on }$\mathbb{R}^{d}\otimes \lbrack 0,1]$\emph{\ by}%
\begin{equation*}
\hat{\eta}^{n}(dy\otimes dt)\doteq \delta _{\hat{w}^{n}(t)}(dy)dt=\delta
_{a(n)\sqrt{n}w^{n}(t)}(dy)dt.
\end{equation*}
\end{construction}

We will refer to this construction when given $\eta ^{n}$ to identify
associated $\bar{X}^{n},\bar{Y}^{n},\hat{w}^{n}$ and $\hat{\eta}^{n}$. Given 
$\nu \in \mathcal{P}(E_{1}\times E_{2})$, with each $E_{i},i=1,2$ a Polish
space, let $\nu _{2}$ denote the second marginal of $\nu $, and let $\nu
_{1|2}$ denote the conditional distribution on $E_{1}$ given a point in $%
E_{2}$.

\begin{theorem}
\label{th:Tightness}Let $\{\eta ^{n}\}$ be a sequence of measures, each $%
\eta ^{n}\in \mathcal{P}((\mathbb{R}^{d})^{n})$, and define the
corresponding random variables as in Construction \ref{tightnessNotation}. \
Assume that for some $K_{E}<\infty $ 
\begin{equation}
\sup_{n\in \mathbb{N}}\left\{ a(n)^{2}nE\left[ \frac{1}{n}%
\dsum\limits_{i=0}^{n-1}R(\left. \eta _{i}^{n}\right\Vert \mu _{\bar{X}%
_{i}^{n}})\right] \right\} \leq K_{E}\text{.}  \label{cond:relEndBnd}
\end{equation}%
Then $\{(\hat{\eta}^{n},\bar{Y}^{n})\}_{n\in \mathbb{N}}$ is tight in $%
\mathcal{P}(\mathbb{R}^{d}\otimes \lbrack 0,1])\otimes C([0,1]:\mathbb{R}%
^{d})$. Consider a subsequence (keeping the index $n$ for convenience) such
that $\{(\hat{\eta}^{n},\bar{Y}^{n})\}$ converges weakly to $(\hat{\eta},%
\hat{Y})$. \ Then with probability 1 $\hat{\eta}_{2}(dt)$ is Lebesgue
measure and 
\begin{equation}
\hat{Y}(t)=\int_{0}^{t}Db(X^{0}(s))\hat{Y}(s)ds+\int_{0}^{t}\hat{w}(s)ds,
\label{defYhatLim}
\end{equation}%
where 
\begin{equation*}
\hat{w}(t)=\int_{\mathbb{R}^{d}}y\hat{\eta}_{\left. 1\right\vert
2}(dy\left\vert t\right. )\text{.}
\end{equation*}%
In addition,%
\begin{equation}
\liminf_{n\rightarrow \infty }a(n)^{2}nE\left[ \frac{1}{n}%
\sum\limits_{i=0}^{n-1}R(\left. \eta _{i}^{n}\right\Vert \mu _{\bar{X}%
_{i}^{n}})\right] \geq E\left[ \int_{0}^{1}\frac{1}{2}\left\Vert \hat{w}%
(s)\right\Vert _{A^{-1}(X^{0}(s))}^{2}ds\right] .
\label{eqn:lower_bound_costs}
\end{equation}
\end{theorem}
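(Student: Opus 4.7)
My plan rests on a single a priori moment estimate. The Donsker--Varadhan dual representation gives $\langle\alpha,w^n(i/n)\rangle\leq R(\eta_i^n\Vert\mu_{\bar X_i^n})+H_c(\bar X_i^n,\alpha)$, and choosing $\alpha=\rho\,w^n(i/n)/\Vert w^n(i/n)\Vert$ with small $\rho$, inserting the upper bound from \eqref{eq:H-bound}, and optimizing over $\rho\in(0,\lambda_{DA}]$ yields $\Vert w^n(i/n)\Vert^2\leq C\,R(\eta_i^n\Vert\mu_{\bar X_i^n})$. Multiplying by $a(n)^2 n$, summing over $i$, and invoking \eqref{cond:relEndBnd} produces $\sup_n E\int_0^1\Vert\hat w^n(s)\Vert^2\,ds<\infty$. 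Since the second marginal of each $\hat\eta^n$ is Lebesgue by construction, this $L^2$ bound combined with Markov's inequality gives tightness of $\{\hat\eta^n\}$; tightness of $\{\bar Y^n\}$ in $C([0,1]:\mathbb R^d)$ then follows from the recursion \eqref{eq:defYbar}, the $L^2$ bound on $\hat w^n$, Gronwall, and the vanishing of the amplified martingale discussed below.

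Along a convergent subsequence $(\hat\eta^n,\bar Y^n)\Rightarrow(\hat\eta,\hat Y)$ I would first establish $\bar X^n\to X^0$ in probability: Cauchy--Schwarz gives $E\int_0^1\Vert w^n\Vert\,ds=O(1/(a(n)\sqrt n))\to 0$, while the martingale $\tfrac1n\sum(\bar\upsilon_i^n-w^n(i/n))$ vanishes by Doob's inequality once its conditional second moment is controlled via \eqref{MGF Bound} (which yields uniformly sub-Gaussian tails of $\mu_x$) together with the entropy bound and a truncation at height $M_n\to\infty$ slowly; Gronwall closes this step. To identify $\hat Y$, I Taylor expand $b(\bar X_i^n)-b(X_i^{n,0})=Db(X_i^{n,0})\bar Y_i^n/(a(n)\sqrt n)+O(\Vert\bar Y_i^n\Vert^2/(a(n)\sqrt n)^2)$ in \eqref{eq:defYbar} and pass to the limit term by term: the drift integral converges by continuity of $Db$ and $\bar X^n\to X^0$; the conditional-mean noise contribution $\int_0^t\hat w^n(s)\,ds$ converges to $\int_0^t\hat w(s)\,ds$ from the weak convergence of $\hat\eta^n$ after truncating in $y$ and controlling the tail via the $L^2$ bound on $\hat w^n$; and the amplified martingale $\tfrac{a(n)}{\sqrt n}\sum_{i\leq nt}(\bar\upsilon_i^n-w^n(i/n))$ has expected quadratic variation bounded by $C(a(n)^2+K_E/n)\to 0$, so it vanishes uniformly in $t$. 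The identity $\hat\eta_2=dt$ a.s.\ is inherited from $\hat\eta_2^n=dt$.

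For \eqref{eqn:lower_bound_costs} I apply Donsker--Varadhan with $\alpha$ proportional to $A^{-1}(\bar X_i^n)w^n(i/n)$, projected onto the range of $A(\bar X_i^n)$ to sidestep the infinite-norm directions in \eqref{eq:DiffInvDecomp}, and combine with the lower bound in \eqref{eq:H-bound} to obtain $R(\eta_i^n\Vert\mu_{\bar X_i^n})\geq\tfrac12\Vert w^n(i/n)\Vert^2_{A^{-1}(\bar X_i^n)}-C\Vert A^{-1}w^n\Vert^3$, noting that $w^n$ automatically lies in the range of $A(\bar X_i^n)$ whenever $R<\infty$. Multiplying by $a(n)^2 n$, summing, and dispatching the cubic error by a truncation on $\Vert\hat w^n\Vert$ reduces the claim to lower semicontinuity of the convex, possibly $+\infty$-valued functional $(\nu,x)\mapsto\int_0^1\tfrac12\Vert\int y\,\nu_{1|2}(dy|s)\Vert^2_{A^{-1}(x(s))}\,ds$ under the joint weak convergence $(\hat\eta^n,\bar X^n)\Rightarrow(\hat\eta,X^0)$, which I would handle either via a dual representation as a supremum of bounded continuous linear functionals of $(\nu,x)$, or by regularizing $A^{-1}$ as $(A+\varepsilon I)^{-1}$ and sending $\varepsilon\downarrow 0$. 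The step I expect to be hardest is the vanishing of the amplified martingale in the previous paragraph: because $a(n)\sqrt n\to\infty$ blows the noise up, the standard large-deviation tightness of empirical noise measures under a bounded entropy no longer applies, forcing the detour through conditional means and requiring that both \eqref{MGF Bound} and the refined decay rate $O(1/(a(n)^2 n))$ of the entropy be used, together with careful truncation, to kill variance terms that naive large-deviation estimates would leave of order one.
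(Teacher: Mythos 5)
Your opening a~priori estimate is wrong, and it cannot be salvaged in the $L^2$ form you claim. From (\ref{eq:H-bound}), optimizing $\rho\|w\|-\tfrac12\rho^2\|A\|-\rho^3 K_{DA}$ over $\rho\in(0,\lambda_{DA}]$ gives $\|w\|^2\leq C\,R$ only when the optimizer $\rho^*\propto\|w\|$ actually lies in $(0,\lambda_{DA}]$; when $\|w\|$ is large the supremum is attained at the endpoint $\rho=\lambda_{DA}$ and you get a \emph{linear} bound $\|w\|\leq C(R+1)$. This is forced by Condition \ref{cond:main}: $H_c(x,\cdot)$ is only assumed finite on a ball, so $L_c(x,\cdot)$ grows linearly, not quadratically, at infinity. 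One can exploit this to construct controls satisfying (\ref{cond:relEndBnd}) for which $E\int_0^1\|\hat w^n\|^2\,ds\to\infty$: e.g.\ a single index with a tilted law whose mean is of size $N_n\sim a(n)^{-2}$ costs relative entropy $\sim N_n$, so $a(n)^2 n\cdot\tfrac1n R_{i_0}\sim a(n)^2 N_n\sim 1$ stays bounded, yet $\tfrac1n(a(n)\sqrt n\,N_n)^2 = a(n)^2 N_n^2\sim a(n)^{-2}\to\infty$. What the hypotheses do yield is $\sup_n E\int_0^1\|\hat w^n\|\,ds<\infty$ together with uniform integrability of $\{\hat w^n\}$ (the paper's Theorem~\ref{th:Bound on Mean}), obtained by taking $\rho=\sqrt G/(a(n)\sqrt n)$; these $L^1$ and UI estimates --- not $L^2$ --- are what drive tightness (via the tightness-function argument and Gronwall in Lemma~\ref{lem:tight_Y_W}) and all subsequent steps.

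Beyond that central error, your outline has the right shape --- replace noises by conditional means, kill the amplified martingale by a Radon--Nikodym/level truncation plus Doob, identify the limit ODE, and prove (\ref{eqn:lower_bound_costs}) by Donsker--Varadhan together with a regularized quadratic lower bound and lower semicontinuity --- but two further claims are stated too loosely. The quadratic-variation bound $C(a(n)^2+K_E/n)$ does not follow without the truncation you mention: $E_{\eta_i^n}\|\bar\upsilon_i^n\|^2$ is controlled only linearly in $R_i$ after truncation (the untruncated conditional second moment can be of order $R_i^2$), and the truncation level must be sent to infinity at a rate tied to $a(n)$, as in the paper's choice $r=1/a(n)$ in Theorem~\ref{th:Wconv_in_prob}. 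And for the cost lower bound, choosing $\alpha\propto A^{-1}(\bar X_i^n)w^n(i/n)$ is not in general admissible in Donsker--Varadhan: even restricted to the range of $A$, small positive eigenvalues make $\|A^{-1}w^n\|$ unbounded, so the cubic error is not controlled by truncating $\hat w^n$ alone. Your fallback of regularizing $A^{-1}$ is the right move and is essentially what the paper does through $\bar L_K(x,\beta)=\sup_\alpha\{\langle\alpha,\beta\rangle-\tfrac12\|\alpha\|_A^2-\tfrac1{2K}\|\alpha\|^2\}$, which simultaneously bounds $\alpha$ and monotonically recovers $\tfrac12\|\beta\|^2_{A^{-1}}$ as $K\to\infty$; combined with the joint empirical measure $\gamma^n$ of $(\bar X^n,\hat w^n,t)$, Fatou's lemma for weak convergence, monotone convergence, and Jensen, this yields (\ref{eqn:lower_bound_costs}) cleanly and is where I'd steer your lower-semicontinuity step.
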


\section{Proof of Theorem \protect\ref{th:Tightness}}

Assume that the bound (\ref{cond:relEndBnd}) holds. \ We will show tightness
of the $\{\hat{\eta}^{n}\}$ measures using the following lemma.

\begin{lemma}
\label{Lemma:RelEntLegIneq}\bigskip Assume Condition \ref{cond:main} and let 
\begin{equation}
L_{c}(x,\beta )=\sup_{\alpha \in \mathbb{R}^{d}}\{\left\langle \alpha ,\beta
\right\rangle -H_{c}(x,\alpha )\}  \label{eq:LT}
\end{equation}%
be the Legendre transform of $H_{c}(x,\cdot )$. \ Then for any $x\in \mathbb{%
R}^{d}$ and $\eta \in \mathcal{P}(\mathbb{R}^{d})$%
\begin{equation*}
R(\left. \eta \right\Vert \mu _{x})\geq L_{c}\left( x,\dint_{\mathbb{R}%
^{d}}y\eta (dy)\right) \text{.}
\end{equation*}
\end{lemma}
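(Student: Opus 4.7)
The plan is to exploit the exponential tilting identity for relative entropy. If $R(\eta\|\mu_x)=\infty$ the inequality is trivial (and moreover, if $\eta$ fails to possess a first moment this case already applies, since the MGF bound in Condition \ref{cond:main} together with a standard entropy/exponential-moment estimate forces finite-entropy $\eta$ to inherit all exponential moments from $\mu_x$). So assume $R(\eta\|\mu_x)<\infty$, let $f=d\eta/d\mu_x$, and let $\bar y\doteq\int y\,\eta(dy)$, which is well defined.

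Fix $\alpha\in\mathbb{R}^d$ with $H_c(x,\alpha)<\infty$ and define the tilted probability measure $\nu_\alpha(dy)\doteq e^{\langle\alpha,y\rangle-H_c(x,\alpha)}\mu_x(dy)$. A direct computation using $\log(d\eta/d\nu_\alpha)=\log f-\langle\alpha,y\rangle+H_c(x,\alpha)$ yields the chain-rule identity
\begin{equation*}
R(\eta\|\mu_x)=R(\eta\|\nu_\alpha)+\langle\alpha,\bar y\rangle-H_c(x,\alpha).
\end{equation*}
Since $R(\eta\|\nu_\alpha)\geq 0$, this gives $R(\eta\|\mu_x)\geq\langle\alpha,\bar y\rangle-H_c(x,\alpha)$ for every $\alpha$ with $H_c(x,\alpha)<\infty$. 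For $\alpha$ with $H_c(x,\alpha)=\infty$ the same inequality is vacuous, so taking the supremum over all $\alpha\in\mathbb{R}^d$ yields $R(\eta\|\mu_x)\geq L_c(x,\bar y)$, as required.

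The only subtlety is verifying that $\bar y$ exists whenever $R(\eta\|\mu_x)<\infty$; this uses the variational formula $\int|y|\,\eta(dy)\leq(1/\lambda)[R(\eta\|\mu_x)+\log\int e^{\lambda\|y\|}\mu_x(dy)]$, combined with the uniform bound on $H_c(x,\alpha)$ for $\|\alpha\|\leq\lambda$ (which gives exponential moments of $\mu_x$ in each coordinate direction). Alternatively, one can apply the bounded test function version of the Donsker--Varadhan variational formula \cite[Proposition 1.4.2(a)]{dupell4} to truncations $g_N(y)=\langle\alpha,y\rangle\wedge N$ and pass to the limit by monotone/dominated convergence using the MGF bound. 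Either route is standard, so I expect no real obstacle beyond this moment verification.
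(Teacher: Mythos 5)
Your argument is correct and takes a genuinely different route from the paper. The paper invokes the Donsker--Varadhan variational formula \cite[Lemma 1.4.3(a)]{dupell4} with the truncated (bounded, continuous) test functions $F_K(y,\alpha)$, establishes the first moment bound $\int\|y\|\,\eta(dy)<\infty$ via the elementary inequality $ab\leq e^a+\ell(b)$ with $\ell(b)=b\log b-b+1$, and then passes to the limit $K\to\infty$ using dominated and monotone convergence on both sides of the variational inequality. You instead write out the exponential tilting identity directly: with $\nu_\alpha(dy)=e^{\langle\alpha,y\rangle-H_c(x,\alpha)}\mu_x(dy)$ one has $R(\eta\|\mu_x)=R(\eta\|\nu_\alpha)+\langle\alpha,\bar y\rangle-H_c(x,\alpha)$, and nonnegativity of $R(\eta\|\nu_\alpha)$ does the rest. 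This is cleaner in that it avoids the truncation/limit passage and makes the mechanism transparent (the Donsker--Varadhan inequality is, after all, proved by exactly this tilting device). Both routes rely on the same a priori estimate, namely that $R(\eta\|\mu_x)<\infty$ together with the local MGF bound \eqref{MGF Bound} forces $\int\|y\|\,\eta(dy)<\infty$; without this, the three-term decomposition of $\log(d\eta/d\nu_\alpha)$ cannot be integrated term by term, so this verification is essential and not merely a side remark. Two small points: first, your parenthetical that finite entropy forces $\eta$ to ``inherit all exponential moments'' from $\mu_x$ overstates what is true and what is needed --- finite entropy plus a local MGF bound gives a finite first moment (in fact any polynomial moment), not finite exponential moments; second, the argument for the first-moment bound should use $\lambda/2^d$ (or some other shrinkage) rather than $\lambda$ itself, since \eqref{MGF Bound} controls $H_c(x,\alpha)$ only for $\|\alpha\|\leq\lambda$ and one must pass from directional MGF bounds to a bound on $\int e^{c\|y\|}\mu_x(dy)$, exactly as the paper does.
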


\begin{proof}
While the result is likely known we could not locate a proof (see \cite[%
Lemma 6.2.3(f)]{dupell4} for a proof when $H_{c}(x,\alpha )$ is finite for
all $\alpha \in \mathbb{R}^{d}$), and so for completeness provide the
details. \ If $R(\left. \eta \right\Vert \mu _{x})=\infty $ the lemma is
automatically true, so we assume $R(\left. \eta \right\Vert \mu _{x})<\infty 
$. \ Define $\ell (b)\doteq b\log b-b+1$ and note that for $a,b\geq 0$ 
\begin{equation}
ab\leq e^{a}+\ell (b)\text{.}  \label{Product Inequality}
\end{equation}%
From (\ref{MGF Bound}) we have 
\begin{equation*}
\dint_{\mathbb{R}^{d}}e^{\frac{\lambda }{2^{d}}\left\Vert y\right\Vert }\mu
_{x}(dy)\leq 2^{d}e^{dK_{\text{mgf}}}<\infty \text{.}
\end{equation*}%
Therefore 
\begin{align*}
& \dint_{\mathbb{R}^{d}}\frac{\lambda }{2^{d}}\left\Vert y\right\Vert \frac{%
d\eta }{d\mu _{x}}(y)\mu _{x}(dy) \\
& \quad \leq \dint_{\mathbb{R}^{d}}e^{\frac{\lambda }{2^{d}}\left\Vert
y\right\Vert }\mu _{x}(dy)+\dint_{\mathbb{R}^{d}}\ell \left( \frac{d\eta }{%
d\mu }(y)\right) \mu _{x}(dy) \\
& \quad \leq 2^{d}e^{dK_{\text{mgf}}}+R(\left. \eta \right\Vert \mu _{x}),
\end{align*}%
and consequently for any $\alpha \in \mathbb{R}^{d}$ 
\begin{equation}
\dint_{\mathbb{R}^{d}}\left\Vert \alpha \right\Vert \left\Vert y\right\Vert 
\frac{d\eta }{d\mu _{x}}(y)\mu _{x}(dy)\leq \frac{2^{d}\left\Vert \alpha
\right\Vert }{\lambda }\left( 2^{d}e^{dK_{\text{mgf}}}+R(\left. \eta
\right\Vert \mu _{x})\right) <\infty \text{.}  \label{eq:usedforDC}
\end{equation}

Define the bounded, continuous function%
\begin{equation*}
F_{K}(y,\alpha )=\left\{ 
\begin{array}{ll}
\left\langle \alpha ,y\right\rangle \text{ } & \text{if }\left\vert
\left\langle \alpha ,y\right\rangle \right\vert \leq K \\ 
\frac{K\left\langle \alpha ,y\right\rangle }{\left\vert \left\langle \alpha
,y\right\rangle \right\vert } & \text{otherwise,}%
\end{array}%
\right.
\end{equation*}%
and note that (\ref{eq:usedforDC}) and dominated convergence give%
\begin{equation*}
\lim_{K\rightarrow \infty }\dint_{\mathbb{R}^{d}}F_{K}(y,\alpha )\eta
(dy)=\left\langle \alpha ,\dint_{\mathbb{R}^{d}}y\eta (dy)\right\rangle 
\text{.}
\end{equation*}%
In addition, dominated convergence gives 
\begin{equation*}
\lim_{K\rightarrow \infty }\dint_{\{y:\left\langle \alpha ,y\right\rangle
<0\}}e^{F_{K}(y,\alpha )}\mu _{x}(dy)=\dint_{\{y:\left\langle \alpha
,y\right\rangle <0\}}e^{\left\langle \alpha ,y\right\rangle }\mu _{x}(dy)
\end{equation*}%
and monotone convergence gives 
\begin{equation*}
\lim_{K\rightarrow \infty }\dint_{\{y:\left\langle \alpha ,y\right\rangle
\geq 0\}}e^{F_{K}(y,\alpha )}\mu _{x}\left( dy\right)
=\dint_{\{y:\left\langle \alpha ,y\right\rangle \geq 0\}}e^{\left\langle
\alpha ,y\right\rangle }\mu _{x}\left( dy\right) ,
\end{equation*}%
so 
\begin{equation*}
\lim_{K\rightarrow \infty }\log \left( \dint_{\mathbb{R}^{d}}e^{F_{K}(y,%
\alpha )}\mu _{x}\left( dy\right) \right) =H_{c}(x,\alpha )\text{.}
\end{equation*}%
By the Donsker-Varadhan variational formula \cite[Lemma 1.4.3(a)]{dupell4} 
\begin{equation*}
R(\left. \eta \right\Vert \mu _{x})\geq \dint_{\mathbb{R}^{d}}F_{K}(y,\alpha
)\eta (dy)-\log \left( \dint_{\mathbb{R}^{d}}e^{F_{K}(y,\alpha )}\mu
_{x}(dy)\right)
\end{equation*}%
for all $K<\infty $ and $\alpha \in \mathbb{R}^{d}$, and so%
\begin{equation*}
R(\left. \eta \right\Vert \mu _{x})\geq \sup_{\alpha \in \mathbb{R}%
^{d}}\left\{ \left\langle \alpha ,\dint_{\mathbb{R}^{d}}y\eta
(dy)\right\rangle -H_{c}(x,\alpha )\right\} =L_{c}\left( x,\dint_{\mathbb{R}%
^{d}}y\eta (dy)\right) ,
\end{equation*}%
which completes the proof of the lemma.\medskip
\end{proof}

The lemma implies the following theorem, which in turn will give tightness
of $\{\hat{\eta}^{n}\}$.

\begin{theorem}
Assume Condition \ref{cond:main} and (\ref{cond:relEndBnd}). For the
processes $\{w^{n}\}$ obtained in Construction \ref{tightnessNotation} 
\begin{equation*}
\sup_{n\in \mathbb{N}}E\left[ \int_{0}^{1}a(n)\sqrt{n}\left\Vert
w^{n}(s)\right\Vert ds\right] <\infty .
\end{equation*}%
In addition, $\{a(n)\sqrt{n}w^{n}(\cdot )\mathbb{\}}_{n\in \mathbb{N}}$ is
uniformly integrable in the sense that 
\begin{equation*}
\lim_{C\rightarrow \infty }\limsup_{n\rightarrow \infty }E\left[
\dint_{0}^{1}1_{\{a(n)\sqrt{n}\left\Vert w^{n}(s)\right\Vert >C\}}a(n)\sqrt{n%
}\left\Vert w^{n}(s)\right\Vert ds\right] =0\text{.}
\end{equation*}%
\label{th:Bound on Mean}
\end{theorem}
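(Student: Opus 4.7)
The plan is to convert the entropy bound (\ref{cond:relEndBnd}) into moment estimates on $v^n(s)\doteq a(n)\sqrt{n}\,w^n(s)$ by combining Lemma \ref{Lemma:RelEntLegIneq} with uniform lower bounds on $L_c(x,\beta)$ that are quadratic near the origin and linear at infinity. Since $w^n$ is constant on each $[i/n,(i+1)/n)$, Lemma \ref{Lemma:RelEntLegIneq} and (\ref{cond:relEndBnd}) give
\begin{equation*}
E\left[\int_0^1 a(n)^2 n\, L_c\!\left(\bar{X}^n(\lfloor ns\rfloor/n),\, w^n(s)\right)ds\right] \leq K_E,
\end{equation*}
so any uniform lower bound on $L_c$ will translate into control of $w^n$.

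To produce such bounds I would test the supremum in (\ref{eq:LT}) at two choices of $\alpha$, using (\ref{eq:H-bound}) to control $H_c$. Taking $\alpha=\beta/K_A$ (so that $\|\alpha\|\le\lambda_{DA}$ when $\|\beta\|$ is small) and absorbing the cubic error gives constants $c_1,c_2>0$ independent of $x$ with $L_c(x,\beta)\geq c_2\|\beta\|^2$ whenever $\|\beta\|\leq c_1$. Taking $\alpha=\lambda_{DA}\beta/\|\beta\|$ instead produces a global bound $L_c(x,\beta)\geq \lambda_{DA}\|\beta\|-M_1$ with $M_1=\tfrac12\lambda_{DA}^2 K_A+\lambda_{DA}^3 K_{DA}$; after enlarging $c_1$ if necessary this becomes $L_c(x,\beta)\geq c_3\|\beta\|$ for all $\|\beta\|\geq c_1$, again uniformly in $x$.

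Translating these bounds to $v^n$: on $\{\|w^n(s)\|\leq c_1\}=\{\|v^n(s)\|\leq c_1 a(n)\sqrt n\}$ the quadratic bound yields $a(n)^2 n\, L_c\geq c_2\|v^n\|^2$, while on $\{\|w^n(s)\|>c_1\}$ the linear bound yields $a(n)^2 n\, L_c\geq c_3 a(n)\sqrt n\,\|v^n\|$. For $n$ large enough that $c_3 a(n)\sqrt n\geq c_2 C$, both regimes give $\|v^n(s)\|\leq a(n)^2 n\, L_c/(c_2 C)$ on $\{\|v^n(s)\|>C\}$ (in the quadratic regime because $\|v^n\|^2\geq C\|v^n\|$; in the linear regime directly from the inequality), so that
\begin{equation*}
E\left[\int_0^1 \|v^n(s)\|\, 1_{\{\|v^n(s)\|>C\}}\,ds\right] \leq \frac{K_E}{c_2 C}.
\end{equation*}
Taking $C=1$ (and handling the finitely many small $n$ by the crude bound $E\!\int\|w^n\|ds\leq \text{const}\cdot(1+E\tfrac1n\sum_i R(\eta_i^n\|\mu_{\bar X_i^n}))$ already established in the proof of Lemma \ref{Lemma:RelEntLegIneq}) gives the first assertion, while sending $C\to\infty$ yields the uniform integrability. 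The main obstacle to overcome is that the amplification $a(n)\sqrt n$ would defeat any purely linear estimate on $L_c$ (since the relative entropy cost is only of order $1/(a(n)^2 n)$); the argument hinges on exploiting the quadratic behavior of $L_c$ near zero and on observing that the crossover threshold $c_1 a(n)\sqrt n$ for $\|v^n\|$ tends to infinity, so that any fixed truncation level $C$ eventually lies inside the favorable quadratic regime.
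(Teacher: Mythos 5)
Your argument is essentially correct and reaches the same conclusion as the paper, but it organizes the key lower bounds on $L_c$ differently. You test the Legendre transform at two $n$-independent scales of $\alpha$: a direction $\beta/K_A$ to extract a quadratic bound $L_c(x,\beta)\geq c_2\|\beta\|^2$ near the origin, and a fixed-magnitude $\alpha$ for a linear bound away from the origin, then exploit that the crossover $c_1 a(n)\sqrt{n}$ (in the $v^n\doteq a(n)\sqrt{n}w^n$ variable) tends to infinity. The paper instead tests at $\alpha=\pm\tfrac{\sqrt{G}}{a(n)\sqrt{n}}e_i$, which shrinks with $n$; multiplying by $a(n)^2 n$ gives the single $n$-uniform linear-with-offset bound $da(n)^2 nL_c(x,w^n)\geq \sqrt{G}\|v^n\|-dG\bar{K}$, and uniform integrability is then obtained with $G=C$ together with a Markov estimate on the Lebesgue measure of $\{\|v^n\|>C\}$. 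Your version avoids that Markov step by observing directly that on $\{\|v^n\|>C\}$ the quadratic bound gives $a(n)^2 n L_c\geq c_2 C\|v^n\|$ once $C<c_1 a(n)\sqrt{n}$, which is slightly cleaner and yields $O(1/C)$ rather than $O(1/\sqrt{C})$ decay; the paper's parametrization is a more compact single-family estimate.

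One step should be tightened. The phrase ``after enlarging $c_1$ if necessary'' does not quite work as written: the linear bound $L_c(x,\beta)\geq\lambda_{DA}\|\beta\|-M_1$ becomes $\geq c_3\|\beta\|$ only for $\|\beta\|\geq 2M_1/\lambda_{DA}$, and enlarging the crossover to this value may push it past the range where $\alpha=\beta/K_A$ is admissible ($\|\beta\|\leq K_A\lambda_{DA}$) and the cubic term in (\ref{eq:H-bound}) is dominated, leaving an annulus with neither bound. To close the gap uniformly in $x$, in the region $\|\beta\|\geq c_1$ test instead with $\alpha=t_0\beta/\|\beta\|$ for a fixed $t_0\in(0,\lambda_{DA}]$ chosen small enough that
\begin{equation*}
\frac{K_A t_0^2}{2}+K_{DA}t_0^3\leq\frac{t_0 c_1}{2};
\end{equation*}
then (\ref{eq:H-bound}) gives $L_c(x,\beta)\geq t_0\|\beta\|-\frac{K_A t_0^2}{2}-K_{DA}t_0^3\geq\frac{t_0}{2}\|\beta\|$ for all $\|\beta\|\geq c_1$, uniformly in $x$. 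With that repair the two-regime estimate is valid and your proof stands.
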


\begin{proof}
We use the following inequality. \ Let $G>0$ satisfy\ \noindent $\lambda
_{DA}\min_{n\in \mathbb{N}}\{a(n)\sqrt{n}\}=\sqrt{G}$ [recall (\ref{a(n)
behavior})] so that $\lambda _{DA}\geq \sqrt{G}/a(n)\sqrt{n}$ for all $n$. \
Define $L_{c}$ by (\ref{eq:LT}). Let $\bar{K}\doteq \lambda
_{DA}K_{DA}+K_{A}/2$. Then with $e_{i}$ denoting the standard unit vectors%
\begin{align*}
& a(n)^{2}nL_{c}(x,\beta ) \\
& \quad =\sup_{\alpha \in \mathbb{R}^{d}}\left[ a(n)\sqrt{n}\left\langle
\alpha ,a(n)\sqrt{n}\beta \right\rangle -a(n)^{2}nH_{c}(x,\alpha )\right] \\
& \quad \geq \pm a(n)\sqrt{n}\left\langle \frac{\sqrt{G}}{a(n)\sqrt{n}}%
e_{i},a(n)\sqrt{n}\beta \right\rangle -a(n)^{2}nH_{c}\left( x,\pm \frac{%
\sqrt{G}}{a(n)\sqrt{n}}e_{i}\right) \\
& \quad \geq \pm \sqrt{G}a(n)\sqrt{n}\beta _{i}-\frac{1}{2}G\left\Vert
A(x)\right\Vert -G\lambda _{DA}K_{DA} \\
& \quad \geq \pm \sqrt{G}a(n)\sqrt{n}\beta _{i}-G\bar{K},
\end{align*}%
where the first inequality follows from making a specific choice of $\alpha $
and the second uses (\ref{eq:H-bound}). Therefore%
\begin{equation}
da(n)^{2}nL_{c}(x,\beta )+dG\bar{K}\geq \sqrt{G}a(n)\sqrt{n}\left\Vert \beta
\right\Vert .  \label{ineq:L-Bound}
\end{equation}%
Using the bound on $L_{c}$ from Lemma \ref{Lemma:RelEntLegIneq} together
with (\ref{cond:relEndBnd}), 
\begin{align}
& d\left( \frac{K_{E}}{\sqrt{G}}+\sqrt{G}\bar{K}\right)  \notag \\
& \quad \geq \frac{da(n)^{2}n}{\sqrt{G}}E\left[ \int_{0}^{1}L_{c}\left( \bar{%
X}^{n}\left( \frac{\left\lfloor ns\right\rfloor }{n}\right) ,w^{n}(s)\right)
ds\right] +d\sqrt{G}\bar{K}  \label{eq:used_in_next} \\
& \quad \geq E\left[ \int_{0}^{1}a(n)\sqrt{n}\left\Vert w^{n}(s)\right\Vert
ds\right] .  \notag
\end{align}

For the uniform integrability, let $C\in (1,\infty )$ be arbitrary and
consider $n$ large enough that 
\begin{equation*}
\min \{\lambda _{DA},1\}\geq \frac{\sqrt{C}}{a(n)\sqrt{n}}\text{.}
\end{equation*}%
Since $\lambda _{DA}\geq 1/a(n)\sqrt{n}$ (which corresponds to using the
estimate above with $G=1$) we have%
\begin{equation*}
E\left[ \int_{0}^{1}a(n)\sqrt{n}\left\Vert w^{n}(s)\right\Vert ds\right]
\leq K^{\ast }\doteq d\left( K_{E}+\frac{1}{2}K_{A}+\lambda
_{DA}K_{DA}\right) .
\end{equation*}%
Therefore 
\begin{equation*}
E\left[ \int_{0}^{1}1_{\{a(n)\sqrt{n}\left\Vert w^{n}(s)\right\Vert >C\}}ds%
\right] \leq \frac{K^{\ast }}{C},
\end{equation*}%
which combined with the estimate (\ref{ineq:L-Bound}) with $G$ replaced by $%
C $ and (\ref{eq:used_in_next}) gives 
\begin{align*}
& \sqrt{C}E\left[ \int_{0}^{1}1_{\{a(n)\sqrt{n}\left\Vert
w^{n}(s)\right\Vert >C\}}a(n)\sqrt{n}\left\Vert w^{n}(s)\right\Vert ds\right]
\\
& \quad \leq E\left[ d\int_{0}^{1}1_{\{a(n)\sqrt{n}\left\Vert
w^{n}(s)\right\Vert >C\}}\left( a(n)^{2}nL_{c}\left( \bar{X}^{n}\left( \frac{%
\left\lfloor ns\right\rfloor }{n}\right) ,w^{n}(s)\right) +C\bar{K}\right) ds%
\right] \\
& \quad \leq da(n)^{2}nE\left[ \int_{0}^{1}L_{c}\left( \bar{X}^{n}\left( 
\frac{\left\lfloor ns\right\rfloor }{n}\right) ,w^{n}(s)\right) ds\right] \\
& \quad \quad +Cd\bar{K}E\left[ \dint_{0}^{1}1_{\{a(n)\sqrt{n}\left\Vert
w^{n}(s)\right\Vert >C\}}ds\right] \\
& \quad \leq K^{\ast }d\left( 1+\bar{K}\right) .
\end{align*}%
We conclude that 
\begin{equation*}
\lim_{C\rightarrow \infty }\limsup_{n\rightarrow \infty }E\left[
\int_{0}^{1}1_{\{a(n)\sqrt{n}\left\Vert w^{n}(s)\right\Vert >C\}}a(n)\sqrt{n}%
\left\Vert w^{n}(s)\right\Vert ds\right] =0,
\end{equation*}%
which is the claimed uniform integrability.
\end{proof}

\bigskip

We continue with the proof of Theorem \ref{th:Tightness}. Note that $%
g(y,t)=\left\Vert y\right\Vert $ is a tightness function on $\mathbb{R}%
^{d}\otimes \lbrack 0,1]$, so by \cite[Theorem A.3.17]{dupell4}%
\begin{equation*}
G(\eta )=\int_{\mathbb{R}^{d}\otimes \lbrack 0,1]}\left\Vert y\right\Vert
\eta (dy\otimes dt)
\end{equation*}%
is a tightness function on $\mathcal{P(}\mathbb{R}^{d}\otimes \lbrack 0,1])$
and 
\begin{equation*}
\bar{G}(\gamma )=\int_{\mathcal{P}\left( \mathbb{R}^{d}\otimes \lbrack
0,1]\right) }\int_{\mathbb{R}^{d}\otimes \lbrack 0,1]}\left\Vert
y\right\Vert \eta (dy\otimes dt)\gamma (d\eta )
\end{equation*}%
is a tightness function on $\mathcal{P(P(}\mathbb{R}^{d}\otimes \lbrack
0,1]))$. \ Since 
\begin{equation*}
\sup_{n\in \mathbb{N}}EG(\hat{\eta}^{n})=\sup_{n\in \mathbb{N}}E\left[ \dint
\left\Vert y\right\Vert \hat{\eta}^{n}(dy\otimes dt)\right] =\sup_{n\in 
\mathbb{N}}E\left[ \int_{0}^{1}a(n)\sqrt{n}\left\Vert w^{n}(s)\right\Vert ds%
\right] <\infty \text{,}
\end{equation*}%
$\{\hat{\eta}^{n}\}$ is tight and consequently there is a subsequence of $\{%
\hat{\eta}^{n}\}$ which converges weakly. To simplify notation we retain $n$
as the index of this convergent subsequence, and denote the weak limit of $\{%
\hat{\eta}^{n}\}$ by $\hat{\eta}$. \ Note that for all $n$ the second
marginal of $\hat{\eta}^{n}(dy\otimes dt)$, which we denote by $\hat{\eta}%
_{2}^{n}(dt)$, is Lebesgue measure, and therefore $\hat{\eta}_{2}(dt)$ is
Lebesgue measure with probability 1. \ 

Our aim is to show that $\bar{Y}^{n}(t)\rightarrow \hat{Y}(t)$ weakly in $%
C([0,1]:\mathbb{R}^{d})$, where $\hat{Y}(t)$ is given by (\ref{defYhatLim})
in terms of the weak limit $\hat{\eta}$. \ To achieve this we introduce the
following processes which serve as intermediate steps. \ Let $\check{Y}%
_{0}^{n}=0$ and%
\begin{equation}
\check{Y}_{i+1}^{n}=\check{Y}_{i}^{n}+\frac{a(n)}{\sqrt{n}}\left( b\left(
X_{i}^{n,0}+\frac{1}{a(n)\sqrt{n}}\check{Y}_{i}^{n}\right) -b\left(
X_{i}^{n,0}\right) \right) +\frac{a(n)}{\sqrt{n}}w^{n}\left( \frac{i}{n}%
\right) ,  \notag
\end{equation}%
together with its continuous time linear interpolation defined for $t\in
\lbrack i/n,i/n+1/n]$ by 
\begin{equation*}
\check{Y}^{n}(t)=(i+1-nt)\check{Y}_{i}^{n}+(nt-i)\check{Y}_{i+1}^{n}\text{.}
\end{equation*}%
Also let 
\begin{equation}
\hat{Y}^{n}(t)=\int_{0}^{t}Db\left( X^{0}(s)\right) \hat{Y}%
^{n}(s)ds+\int_{0}^{t}\hat{w}^{n}(s)ds  \label{defYhatSeq}
\end{equation}%
where 
\begin{equation*}
\hat{w}^{n}(t)=\int_{\mathbb{R}^{d}}y\hat{\eta}_{\left. 1\right\vert
2}^{n}(dy\left\vert t\right. )
\end{equation*}%
as in Construction \ref{tightnessNotation}. \ These are both random
variables taking values in $C([0,1]:\mathbb{R}^{d})$. \ Note that $\bar{Y}%
^{n}$ differs from $\check{Y}^{n}$ because $\bar{Y}^{n}$ is driven by the
actual noises and $\check{Y}^{n}$ is driven by their conditional means. \
While the driving terms of $\hat{Y}^{n}$ and $\check{Y}^{n}$ are the same
[recall that $a(n)\sqrt{n}w^{n}(t)=\hat{w}^{n}(t)$], they differ in that $%
\check{Y}^{n}$ is still a linear interpolation of a discrete time process
whereas $\hat{Y}^{n}$ satisfies an ODE. \ The goal is to show that along the
subsequence where $\hat{\eta}^{n}\rightarrow \hat{\eta}$ weakly 
\begin{equation*}
\bar{Y}^{n}-\check{Y}^{n}\rightarrow 0,\check{Y}^{n}-\hat{Y}^{n}\rightarrow
0,\text{ and }\hat{Y}^{n}\rightarrow \hat{Y}
\end{equation*}%
in $C([0,1]:\mathbb{R}^{d})$, all in distribution. \ To show $\hat{Y}%
^{n}\rightarrow \hat{Y}$ we show that $\{\hat{Y}^{n}\}$ is tight in $C([0,1]:%
\mathbb{R}^{d})$ and use the mapping defined by (\ref{defYhatSeq}) from $%
\int_{0}^{\cdot }\hat{w}^{n}$ to $\hat{Y}^{n}$. \ Recall that $\sup_{x\in 
\mathbb{R}^{d}}\left\Vert Db(x)\right\Vert \leq K_{b}$. \ The following
lemma is an easy consequence of Gronwall's inequality.

\begin{lemma}
\ Let $u\in L^{1}([0,1]:\mathbb{R}^{d})$ be arbitrary and $\phi ^{u}$ be
defined as in (\ref{phiDefined}). Then for $0\leq s\leq t\leq 1$ 
\begin{equation*}
\left\Vert \phi ^{u}(t)-\phi ^{u}(s)\right\Vert \leq
(t-s)K_{b}e^{K_{b}}\int_{0}^{1}\left\Vert u(r)\right\Vert
dr+\dint_{s}^{t}\left\Vert u(r)\right\Vert dr\text{.}
\end{equation*}
\end{lemma}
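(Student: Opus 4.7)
The plan is a two-step Gronwall argument. Starting from the defining relation
\[
\phi^{u}(t) = \int_{0}^{t} Db(X^{0}(r))\phi^{u}(r)\,dr + \int_{0}^{t} u(r)\,dr,
\]
the immediate estimate
\[
\|\phi^{u}(t)\| \leq K_{b}\int_{0}^{t} \|\phi^{u}(r)\|\,dr + \int_{0}^{t}\|u(r)\|\,dr
\]
holds since $\sup_{x}\|Db(x)\| \leq K_{b}$. Treating $\int_{0}^{t}\|u(r)\|dr$ as a nondecreasing forcing term bounded by $\int_{0}^{1}\|u(r)\|dr$, Gronwall's inequality yields the uniform bound
\[
\sup_{r\in[0,1]} \|\phi^{u}(r)\| \leq e^{K_{b}}\int_{0}^{1}\|u(r)\|\,dr.
\]

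The second step is to write the increment directly. For $0\leq s\leq t\leq 1$,
\[
\phi^{u}(t)-\phi^{u}(s) = \int_{s}^{t} Db(X^{0}(r))\phi^{u}(r)\,dr + \int_{s}^{t} u(r)\,dr,
\]
so taking norms and using $\|Db\|\leq K_{b}$ together with the uniform bound from Step 1 gives
\[
\|\phi^{u}(t)-\phi^{u}(s)\| \leq K_{b}(t-s)\sup_{r\in[0,1]}\|\phi^{u}(r)\| + \int_{s}^{t}\|u(r)\|\,dr \leq (t-s)K_{b}e^{K_{b}}\int_{0}^{1}\|u(r)\|\,dr + \int_{s}^{t}\|u(r)\|\,dr,
\]
which is exactly the claimed bound.

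There is no real obstacle here; the result is a routine linear-ODE estimate. The only point to be careful about is whether $\phi^{u}$ is well-defined and absolutely continuous for $u \in L^{1}$, but this follows by standard Picard iteration using the Lipschitz bound on $Db$. The separation of the forcing term $\int_{s}^{t}\|u(r)\|dr$ (kept un-amplified) from the drift contribution (which picks up the factor $(t-s)K_{b}e^{K_{b}}$) is precisely what the statement records, and it will be used later to handle equicontinuity of the family $\{\phi^{u}\}$ when $u$ has bounded $L^{1}$-norm but possibly concentrated mass.
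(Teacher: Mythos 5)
Your proof is correct and is precisely the Gronwall argument the paper has in mind (the paper itself gives no proof, merely noting the lemma ``is an easy consequence of Gronwall's inequality''). Both the uniform a priori bound $\sup_{r\in[0,1]}\|\phi^{u}(r)\|\leq e^{K_{b}}\int_{0}^{1}\|u(r)\|\,dr$ and the subsequent increment estimate are exactly as intended.
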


With this lemma and the uniform integrability of $\{\hat{\eta}^{n}\}$ given
in Theorem \ref{th:Bound on Mean}, tightness follows.

\begin{lemma}
\label{lem:tight_Y_W}Assume Condition \ref{cond:main} and (\ref%
{cond:relEndBnd}). The sequence $\{\hat{Y}^{n}\}$ defined in (\ref%
{defYhatSeq}) in terms of the measures $\{\eta ^{n}\}$ via Construction \ref%
{tightnessNotation}\ is tight in $C([0,1]:\mathbb{R}^{d})$, as is $%
\{\int_{0}^{\cdot }\hat{w}^{n}ds\}$.
\end{lemma}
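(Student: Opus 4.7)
The plan is to verify the standard Arzelà--Ascoli tightness criterion in $C([0,1]:\mathbb{R}^d)$: since $\hat{Y}^n(0) = 0$ and $(\int_0^\cdot \hat{w}^n\, ds)(0) = 0$, tightness reduces to controlling the modulus of continuity
\[
m_\delta(f) \doteq \sup_{|t-s|\le \delta} \|f(t)-f(s)\|,
\]
i.e.\ showing that $\lim_{\delta\to 0}\limsup_n P(m_\delta(\hat{Y}^n)>\varepsilon)=0$ for each $\varepsilon>0$, and likewise for $\int_0^\cdot \hat{w}^n\, ds$.

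First I would observe that $\hat{Y}^n=\phi^{\hat{w}^n}$ by the definition (\ref{defYhatSeq}), and apply the preceding Gronwall lemma with $u=\hat{w}^n$ to obtain, for $0\le s\le t\le 1$,
\[
\|\hat{Y}^n(t)-\hat{Y}^n(s)\| \;\le\; (t-s)\,K_b e^{K_b}\int_0^1 \|\hat{w}^n(r)\|\,dr \;+\; \int_s^t \|\hat{w}^n(r)\|\,dr.
\]
The first summand is easy to handle: Theorem \ref{th:Bound on Mean} guarantees that $\sup_n E[\int_0^1\|\hat{w}^n(r)\|\,dr]<\infty$, so by Markov's inequality this contribution to $m_\delta$ is smaller than $\varepsilon/2$ with high probability once $\delta$ is small.

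The delicate piece is the second summand $\int_s^t\|\hat{w}^n(r)\|\,dr$; merely bounding its $L^1$ norm does not prevent mass from concentrating on short intervals. This is precisely where the uniform integrability half of Theorem \ref{th:Bound on Mean} is essential. For any $C>0$,
\[
\int_s^t\|\hat{w}^n(r)\|\,dr \;\le\; C(t-s) + \int_0^1 1_{\{\|\hat{w}^n(r)\|>C\}}\|\hat{w}^n(r)\|\,dr.
\]
Given $\varepsilon>0$, I would first pick $C$ large enough that the expectation of the last integral is smaller than $\varepsilon^2/8$ for all large $n$ (possible by the uniform integrability), then choose $\delta$ small enough that $C\delta<\varepsilon/4$ and that the $W_n=\int_0^1\|\hat{w}^n\|$ contribution from the Gronwall bound is likewise $<\varepsilon/4$ with high probability. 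Combining via Markov gives $\limsup_n P(m_\delta(\hat{Y}^n)>\varepsilon)\to 0$ as $\delta\to 0$, which together with $\hat{Y}^n(0)=0$ yields tightness of $\{\hat{Y}^n\}$. Exactly the same argument, with the Gronwall summand omitted, gives tightness of $\{\int_0^\cdot \hat{w}^n\, ds\}$.

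The main obstacle is the control of the second summand above: uniform boundedness of $E[\int_0^1\|\hat{w}^n\|\,dr]$ alone does not suffice (one could have tall, thin spikes in $\|\hat{w}^n\|$ that contribute $O(1)$ to the modulus of continuity on arbitrarily small intervals), and it is the uniform integrability established in Theorem \ref{th:Bound on Mean}, itself obtained from the relative entropy bound via the Legendre transform inequality of Lemma \ref{Lemma:RelEntLegIneq}, that rules this out.
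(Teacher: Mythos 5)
Your proof is correct and follows essentially the same route as the paper: reduce tightness to a modulus-of-continuity estimate (using $\hat{Y}^n(0)=0$), apply the Gronwall lemma to bound $\|\hat{Y}^n(t)-\hat{Y}^n(s)\|$ by a drift term proportional to $\delta\int_0^1\|\hat{w}^n\|$ plus $\int_s^t\|\hat{w}^n\|$, split the latter via a level $C$ into $C\delta$ plus a tail controlled by the uniform integrability from Theorem \ref{th:Bound on Mean}, and conclude with Markov's inequality. The paper packages the Markov step slightly more compactly (bounding a single expectation and choosing $C$ then $\delta$ so each contribution is below $\varepsilon^2/2$), but the decomposition and the role of each half of Theorem \ref{th:Bound on Mean} are exactly as you describe.
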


\begin{proof}
It suffices to show that for any $\varepsilon >0$ there is $\delta >0$ such
that%
\begin{equation*}
\limsup_{n\rightarrow \infty }P\left( \sup_{\left\vert s-t\right\vert \leq
\delta }\left\Vert \hat{Y}^{n}(t)-\hat{Y}^{n}(s)\right\Vert >\varepsilon
\right) <\varepsilon \text{.}
\end{equation*}%
Define 
\begin{eqnarray*}
T(C) &\doteq &\limsup_{n\rightarrow \infty }E\left[ \dint_{0}^{1}1_{\{\left%
\Vert \hat{w}^{n}\left( t\right) \right\Vert >C\}}\left\Vert \hat{w}%
^{n}(t)\right\Vert dt\right] \\
&=&\limsup_{n\rightarrow \infty }E\left[ \int_{\{\left\Vert y\right\Vert
>C\}}\left\Vert y\right\Vert \hat{\eta}^{n}(dy\otimes dt)\right] .
\end{eqnarray*}%
By Theorem \ref{th:Bound on Mean} $T(C)\rightarrow 0$ as $C\rightarrow
\infty $. Define also $K_{\eta }=\sup_{n\in \mathbb{N}}E\int_{0}^{1}\left%
\Vert \hat{w}^{n}(t)\right\Vert dt$, which is finite by Theorem \ref%
{th:Bound on Mean}. Let $\varepsilon >0$ be arbitrary. \ Then for any $s<t$
satisfying $t-s\leq \delta $ the previous lemma implies%
\begin{equation*}
\left\Vert \hat{Y}^{n}(t)-\hat{Y}^{n}(s)\right\Vert \leq \delta
K_{b}e^{K_{b}}\int_{0}^{1}\left\Vert \hat{w}^{n}(r)\right\Vert
dr+\dint_{s}^{t}\left\Vert \hat{w}^{n}(r)\right\Vert dr.
\end{equation*}%
Since 
\begin{equation*}
\dint_{s}^{t}\left\Vert \hat{w}^{n}(r)\right\Vert dr\leq C\delta
+\dint_{0}^{1}1_{\{\left\Vert \hat{w}^{n}(r)\right\Vert >C\}}\left\Vert \hat{%
w}^{n}(r)\right\Vert dr,
\end{equation*}%
it follows that 
\begin{align*}
\left\Vert \hat{Y}^{n}(t)-\hat{Y}^{n}(s)\right\Vert & \leq \delta \left(
C+K_{b}e^{K_{b}}\int_{0}^{1}\left\Vert \hat{w}^{n}(r)\right\Vert dr\right) \\
& \quad +\dint_{0}^{1}1_{\{\left\Vert \hat{w}^{n}(r)\right\Vert
>C\}}\left\Vert \hat{w}^{n}(r)\right\Vert dr\text{.}
\end{align*}%
Hence by Markov's inequality%
\begin{align*}
& \limsup_{n\rightarrow \infty }P\left( \sup_{\left\vert s-t\right\vert \leq
\delta }\left\Vert \hat{Y}^{n}(t)-\hat{Y}^{n}(s)\right\Vert >\varepsilon
\right) \\
& \quad \leq \frac{\delta }{\varepsilon }\limsup_{n\rightarrow \infty }E%
\left[ \left( C+K_{b}e^{K_{b}}\int_{0}^{1}\left\Vert \hat{w}%
^{n}(r)\right\Vert dr\right) \right] \\
& \qquad +\frac{1}{\varepsilon }\limsup_{n\rightarrow \infty }E\left[
\dint_{0}^{1}1_{\{\left\Vert \hat{w}^{n}(r)\right\Vert >C\}}\left\Vert \hat{w%
}^{n}(r)\right\Vert dr\right] \\
& \quad \leq \frac{\delta }{\varepsilon }(C+K_{b}e^{K_{b}}K_{\eta })+\frac{1%
}{\varepsilon }T(C).
\end{align*}%
Choose $C<\infty $ such that $T(C)<\varepsilon ^{2}/2$ and then choose $%
\delta >0$ so that the $\delta (C+K_{b}e^{K_{b}}K_{\eta })<\varepsilon
^{2}/2 $. This shows the tightness of $\{\hat{Y}^{n}\}$. The tightness of $%
\{\int_{0}^{\cdot }\hat{w}^{n}ds\}$ is simpler, and follows from the bound 
\begin{equation*}
\limsup_{n\rightarrow \infty }P\left( \sup_{\left\vert s-t\right\vert \leq
\delta }\dint_{s}^{t}\left\Vert \hat{w}^{n}(r)\right\Vert dr>\varepsilon
\right) \leq \delta \frac{C}{\varepsilon }+\frac{1}{\varepsilon }T(C).
\end{equation*}
\end{proof}

\medskip\ \ We still need to show that $\hat{Y}^{n}$ converges to $\hat{Y}$.
\ This also relies on the uniform integrability given by Theorem \ref%
{th:Bound on Mean}.

\begin{lemma}
\label{lem:limit_Y}Assume Condition \ref{cond:main} and (\ref{cond:relEndBnd}%
). \ Let the sequence $\{\hat{Y}^{n}\left( t\right) \}$ be defined by (\ref%
{defYhatSeq}), let $\hat{Y}(t)$ be defined by (\ref{defYhatLim}), and
consider a convergent subsequence $\{(\hat{Y}^{n},\hat{\eta}^{n})\}$ with
limit $(\hat{Y}^{\ast },\hat{\eta})$. Then w.p.1 $\hat{Y}^{\ast }=\hat{Y}$.
\end{lemma}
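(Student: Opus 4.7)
The plan is to work on a convergent subsequence, pass to the limit in the integral equation (\ref{defYhatSeq}) satisfied by $\hat{Y}^n$, and then use uniqueness of solutions to the limiting linear Volterra equation to conclude $\hat{Y}^{\ast }=\hat{Y}$. First I would invoke the Skorohod representation theorem for the (tight) joint law of $(\hat{\eta}^n,\hat{Y}^n)$, passing to a probability space on which $\hat{\eta}^n\to\hat{\eta}$ in $\mathcal{P}(\mathbb{R}^d\otimes[0,1])$ and $\hat{Y}^n\to\hat{Y}^{\ast}$ uniformly on $[0,1]$ almost surely along the chosen subsequence.

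The core and hardest step is to show that for each $t\in[0,1]$,
\[
\int_0^t \hat{w}^n(s)\,ds \longrightarrow \int_0^t \hat{w}(s)\,ds
\]
almost surely. Since the second marginal of $\hat{\eta}^n$ is Lebesgue measure, one may write
\[
\int_0^t \hat{w}^n(s)\,ds \;=\; \int_{\mathbb{R}^d\times[0,1]} y\,\mathbf{1}_{[0,t]}(s)\,\hat{\eta}^n(dy\otimes ds),
\]
and the integrand is both discontinuous at $s=t$ and unbounded in $y$, so this does not follow directly from Portmanteau; this combination is the main obstacle. I would handle it by truncation: fix a continuous cutoff $\chi_C:\mathbb{R}^d\to[0,1]$ with $\chi_C(y)=1$ for $\|y\|\le C$ and $\chi_C(y)=0$ for $\|y\|\ge 2C$. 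The function $(y,s)\mapsto y\chi_C(y)\mathbf{1}_{[0,t]}(s)$ is bounded with discontinuity set contained in $\{s=t\}$, which is $\hat{\eta}$-null because $\hat{\eta}_2$ is Lebesgue. Portmanteau therefore yields convergence of the truncated integrals under $\hat{\eta}^n$ to the corresponding one under $\hat{\eta}$. The tail $\int (1-\chi_C(y))\|y\|\,\hat{\eta}^n(dy\otimes ds)$ is uniformly small in $n$ by the uniform integrability in Theorem \ref{th:Bound on Mean} (applicable since (\ref{cond:relEndBnd}) holds), and Fatou applied to the same bound gives $E\int_{\mathbb{R}^d\times[0,1]}\|y\|\,\hat{\eta}(dy\otimes ds)<\infty$, so $\hat{w}(s)=\int y\,\hat{\eta}_{1|2}(dy|s)$ is well defined for a.e.\ $s$ and the tail under $\hat{\eta}$ also vanishes as $C\to\infty$. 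Sending $C\to\infty$ after $n\to\infty$ delivers the claimed convergence of driving integrals.

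Finally I would pass to the limit in the linear term. Because $\|Db\|\le K_b$ and $X^0$ is continuous, $s\mapsto Db(X^0(s))$ is a bounded continuous matrix-valued function on $[0,1]$, and the uniform a.s.\ convergence $\hat{Y}^n\to\hat{Y}^{\ast}$ gives $\int_0^t Db(X^0(s))\hat{Y}^n(s)\,ds\to\int_0^t Db(X^0(s))\hat{Y}^{\ast}(s)\,ds$. Combining with the previous paragraph, $\hat{Y}^{\ast}$ satisfies
\[
\hat{Y}^{\ast}(t) \;=\; \int_0^t Db(X^0(s))\hat{Y}^{\ast}(s)\,ds + \int_0^t \hat{w}(s)\,ds,
\]
which is precisely the linear integral equation (\ref{defYhatLim}) defining $\hat{Y}$. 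Uniqueness via Gronwall's inequality (using $\|Db\|\le K_b$) then forces $\hat{Y}^{\ast}=\hat{Y}$ almost surely.
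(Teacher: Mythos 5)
Your proof is correct and takes essentially the same approach as the paper's: pass to the limit in the integral equation (\ref{defYhatSeq}) using the uniform integrability of Theorem \ref{th:Bound on Mean} together with the fact that $\hat{\eta}_{2}$ is Lebesgue measure with probability 1, and then conclude by uniqueness of the linear integral equation. The paper states this extremely tersely, and you have supplied the implicit details (Skorohod representation, truncation plus the mapping theorem to handle both the unbounded integrand in $y$ and the discontinuity at $s=t$, tail control via the uniform integrability, and Gronwall uniqueness) that the paper's one-line argument takes as understood.
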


\begin{proof}
\ We can write 
\begin{equation*}
\hat{Y}^{n}(t)=\int_{0}^{t}Db(X^{0}(s))\hat{Y}^{n}(s)ds+\int_{0}^{t}\int_{%
\mathbb{R}^{d}}y\hat{\eta}^{n}(dy\otimes ds).
\end{equation*}%
Using the uniform integrability proved in Theorem \ref{th:Bound on Mean} and
that $\hat{\eta}_{2}$ is Lebesgue measure w.p.1, sending $n\rightarrow
\infty $ and using the definition of $\hat{w}$ gives%
\begin{eqnarray*}
\hat{Y}^{\ast }(t) &=&\int_{0}^{t}Db(X^{0}(s))\hat{Y}^{\ast
}(s)ds+\int_{0}^{t}\int_{\mathbb{R}^{d}}y\hat{\eta}(dy\otimes ds) \\
&=&\int_{0}^{t}Db(X^{0}(s))\hat{Y}^{\ast }(s)ds+\int_{0}^{t}\hat{w}(s)ds.
\end{eqnarray*}%
By uniqueness of the solution, $\hat{Y}^{\ast }=\hat{Y}$ follows.
\end{proof}

\medskip It remains to show $\bar{Y}^{n}-\check{Y}^{n}\rightarrow 0$ and $%
\check{Y}^{n}-\hat{Y}^{n}\rightarrow 0$. \ We begin with $\bar{Y}^{n}-\check{%
Y}^{n}\rightarrow 0$. \ Recall that the difference between $\bar{Y}^{n}$ and 
$\check{Y}^{n}$ is that the first is driven by the actual noises and the
second is driven by their conditional means. \ The following theorem is a
law of large numbers type result for the difference between the noises and
their conditional means, and is the most complicated part of the analysis.

\begin{theorem}
\label{th:Wconv_in_prob}Assume Condition \ref{cond:main} and (\ref%
{cond:relEndBnd}). Consider the sequence $\{\bar{\upsilon}%
_{i}^{n}\}_{i=0,\ldots ,n-1}$ of controlled noises and $\{w^{n}(i/n)\}_{i=0,%
\ldots ,n-1}$ of means of the controlled noises as in Construction \ref%
{tightnessNotation}. For $i\in \{1,\ldots ,n\}$ let%
\begin{equation*}
W_{i}^{n}\doteq \frac{1}{n}\dsum\limits_{j=0}^{i-1}a(n)\sqrt{n}\left( \bar{%
\upsilon}_{i}^{n}-w^{n}\left( i/n\right) \right) \text{.}
\end{equation*}%
Then for any $\delta >0$ 
\begin{equation*}
\lim_{n\rightarrow \infty }P\left[ \max_{i\in \{1,\ldots ,n\}}\left\Vert
W_{i}^{n}\right\Vert \geq \delta \right] =0\text{.}
\end{equation*}
\end{theorem}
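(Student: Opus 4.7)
First observe that $W_i^n = \tfrac{a(n)}{\sqrt n}\sum_{j=0}^{i-1}\xi_j$, where $\xi_j \doteq \bar{\upsilon}_j^n - w^n(j/n)$ is a martingale difference with respect to the natural filtration $\{\mathcal{F}_j\}$. My approach is a truncation argument: pick a level $M_n \doteq (2/\lambda'')\log(a(n)\sqrt n)$ with $\lambda''$ chosen below, which diverges to $\infty$ by (\ref{a(n) behavior}), and split $\xi_j = \xi_j^{(1)} + \xi_j^{(2)}$ into centered truncated and tail pieces
\[
\xi_j^{(1)} \doteq \bar{\upsilon}_j^n 1_{\{\|\bar{\upsilon}_j^n\|\leq M_n\}} - E[\,\bar{\upsilon}_j^n 1_{\{\|\bar{\upsilon}_j^n\|\leq M_n\}}\mid\mathcal{F}_{j-1}],\qquad \xi_j^{(2)} \doteq \xi_j - \xi_j^{(1)}.
\]
Both sequences are $\{\mathcal F_j\}$-martingale differences, so it suffices to analyze the contribution of each and combine by the triangle inequality.

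For the tail part, choose $\lambda' = \lambda'' = \lambda/2^{d+1}$. The exponential moment bound $\int e^{(\lambda/2^d)\|y\|}\mu_x(dy) \leq 2^d e^{dK_{\text{mgf}}}$ used in the proof of Lemma~\ref{Lemma:RelEntLegIneq} gives $\int_{\{\|y\|>M_n\}} e^{\lambda'\|y\|}\mu_x(dy) \leq 2^d e^{dK_{\text{mgf}}} e^{-\lambda'' M_n}$, so applying the Donsker-Varadhan variational formula to $f(y)=\lambda'\|y\| 1_{\{\|y\|>M_n\}}$ and using $\log(1+t)\leq t$ yields
\[
E_{\eta_j^n}\!\left[\|y\| 1_{\{\|y\|>M_n\}}\right] \;\leq\; C_1\, e^{-\lambda'' M_n} + \tfrac{1}{\lambda'}R(\eta_j^n\|\mu_{\bar X_j^n}).
\]
Summing over $j$, multiplying by $a(n)/\sqrt n$, and invoking (\ref{cond:relEndBnd}) gives
\[
E\!\left[\tfrac{a(n)}{\sqrt n}\sum_{j=0}^{n-1}\|\xi_j^{(2)}\|\right] \;\leq\; 2 C_1 \,a(n)\sqrt n\, e^{-\lambda'' M_n} + \tfrac{2 K_E}{\lambda' a(n)\sqrt n}\;\longrightarrow\;0
\]
by the choice of $M_n$; Markov's inequality then controls $\max_i\|\tfrac{a(n)}{\sqrt n}\sum_{j<i}\xi_j^{(2)}\|$.

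For the bounded part I apply Doob's $L^2$ maximal inequality,
\[
E\!\left[\max_i\Big\|\tfrac{a(n)}{\sqrt n}\textstyle\sum_{j<i}\xi_j^{(1)}\Big\|^2\right] \;\leq\; \tfrac{4a(n)^2}{n}\sum_j E\|\xi_j^{(1)}\|^2 \;\leq\; \tfrac{4a(n)^2}{n}\sum_j E\!\left[E_{\eta_j^n}[\|y\|^2 1_{\{\|y\|\leq M_n\}}]\right].
\]
To bound the conditional second moment I would invoke the Pinsker-Csiszar inequality $\text{TV}(\eta,\mu)\leq\sqrt{R(\eta\|\mu)/2}$ applied to $g(y)=\|y\|^2 1_{\{\|y\|\leq M_n\}}$ (with $\|g\|_\infty\leq M_n^2$), and use $E_{\mu_x}[\|y\|^2]=\text{tr}\,A(x)\leq dK_A$ to obtain
\[
E_{\eta_j^n}\!\left[\|y\|^2 1_{\{\|y\|\leq M_n\}}\right] \;\leq\; dK_A + M_n^2\sqrt{2R(\eta_j^n\|\mu_{\bar X_j^n})}.
\]
Cauchy-Schwarz and (\ref{cond:relEndBnd}) give $\sum_j E\sqrt{R(\eta_j^n\|\mu_{\bar X_j^n})}\leq \sqrt{n K_E}/a(n)$, and hence $\tfrac{a(n)^2}{n}\sum_j E\|\xi_j^{(1)}\|^2 \leq 4 a(n)^2 dK_A + C' a(n) M_n^2/\sqrt n \to 0$, since $M_n=O(\log n)$ and $a(n)\to 0$. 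A second application of Markov finishes off $\xi^{(1)}$.

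The principal obstacle is the tension in selecting $M_n$: it must grow rapidly enough that the tail expectation $a(n)\sqrt n\, e^{-\lambda'' M_n}$ vanishes (forcing $M_n\gg\log(a(n)\sqrt n)$), yet slowly enough that the Pinsker-driven second-moment inflation $M_n^2\sqrt{R_j}$, after Cauchy-Schwarz and the $a(n)^2/n$ normalization, also vanishes. The logarithmic choice $M_n\sim\log(a(n)\sqrt n)$ together with the standing assumptions $a(n)\to 0$ and $a(n)\sqrt n\to\infty$ gives exactly the room required; note that a direct exponential Azuma estimate does not suffice because $a(n)^2 M_n^2$ need not tend to zero for all admissible scalings (e.g.\ $a(n)=1/\log n$), whereas the $L^2$ Doob bound above does.
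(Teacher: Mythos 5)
Your proof is correct, and while it shares the same overall architecture as the paper's argument (split the controlled noise minus its mean into a truncated martingale part controlled by Doob's $L^{2}$ maximal inequality and a tail part controlled via a relative-entropy estimate plus Markov), the technical execution is genuinely different in two respects. First, the paper truncates on the size of the Radon--Nikodym derivative $f_i^n=\frac{d\eta_i^n}{d\mu_{\bar X_i^n}}$ (setting the noise to zero when $f_i^n\geq r$, with $r=1/a(n)$), while you truncate on the magnitude $\|\bar\upsilon_j^n\|\leq M_n$ with $M_n\sim \log(a(n)\sqrt n)$. Second, and as a consequence, the second-moment estimates are different: the paper gets $E\|\xi_i^{n,r}\|^2\leq r\,K_{\mu,2}$ for free because on the good set the likelihood ratio is bounded by $r$, so the controlled conditional second moment is at most $r$ times the uncontrolled one; you instead need the Csisz\'ar--Kullback--Pinsker inequality to convert relative entropy into a total-variation bound, paying $M_n^2\sqrt{2R_j}$ and then amortizing by Cauchy--Schwarz over $j$. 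Likewise, for the tail the paper combines $ab\leq e^a+\ell(b)$, H\"older, and a Markov estimate of $\mu_{\bar X_i^n}(\{f_i^n\geq r\})$, while you apply the Donsker--Varadhan variational formula directly to $f(y)=\lambda'\|y\|1_{\{\|y\|>M_n\}}$, which is cleaner (it avoids a three-way decomposition and handles the conditional mean on the tail automatically). The trade-offs are roughly a wash: the paper's truncation avoids Pinsker but requires the extra H\"older/Markov step and the three-term decomposition, whereas yours avoids that but needs the careful logarithmic tuning of $M_n$ you correctly flag at the end. One technicality you should make explicit: the Donsker--Varadhan formula is stated for bounded test functions, so the application to the unbounded $f$ above should go through a monotone truncation as the paper does in the proof of Lemma~\ref{Lemma:RelEntLegIneq}; this is routine and does not affect the conclusion.
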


\begin{proof}
According to (\ref{cond:relEndBnd})%
\begin{equation*}
\frac{1}{n}\sum\limits_{i=0}^{n-1}E[R(\left. \eta _{i}^{n}\right\Vert \mu _{%
\bar{X}_{i}^{n}})]\leq \frac{K_{E}}{a^{2}(n)n}.
\end{equation*}%
Because of this the (random) Radon-Nikodym derivatives 
\begin{equation*}
f_{i}^{n}(y)=\frac{d\eta _{i}^{n}}{d\mu _{\bar{X}_{i}^{n}}}(y)
\end{equation*}%
are well defined and can be selected in a measurable way. We will control
the magnitude of the noise when the Radon-Nikodym derivative is large by
bounding 
\begin{equation*}
\frac{1}{n}\sum\limits_{i=0}^{n-1}E[1_{\{f_{i}^{n}(\bar{\upsilon}%
_{i}^{n})\geq r\}}\left\Vert \bar{\upsilon}_{i}^{n}\right\Vert ]
\end{equation*}%
for large $r$. \ 

From the bound on the moment generating function (\ref{MGF Bound}), 
\begin{equation*}
\sup_{x\in \mathbb{R}^{d}}\dint_{\mathbb{R}^{d}}e^{\frac{\lambda }{2^{d}}%
\left\Vert y\right\Vert }\mu _{x}(dy)\leq 2^{d}e^{dK_{\text{mgf}}}\text{.}
\end{equation*}%
Let $\sigma =\min \{\lambda /2^{d+1},1\}$ and recall the definition $\ell
(b)\doteq b\log b-b+1$. Then 
\begin{equation*}
\bigskip \frac{1}{n}\sum\limits_{i=0}^{n-1}E\left[ 1_{\{f_{i}^{n}(\bar{%
\upsilon}_{i}^{n})\geq r\}}\left\Vert \bar{\upsilon}_{i}^{n}\right\Vert %
\right] =\frac{1}{n}\sum\limits_{i=0}^{n-1}E\left[ \int_{\{y:f_{i}^{n}(y)%
\geq r\}}\left\Vert y\right\Vert f_{i}^{n}(y)\mu _{\bar{X}_{i}^{n}}(dy)%
\right]
\end{equation*}%
and the bound $ab\leq e^{a}+\ell (b)$ for $a,b\geq 0$ with $a=\sigma
\left\Vert y\right\Vert $ and $b=f_{i}^{n}(y)$ gives that for all $i$ 
\begin{align*}
& E\left[ \int_{\{y:f_{i}^{n}(y)\geq r\}}\left\Vert y\right\Vert
f_{i}^{n}(y)\mu _{\bar{X}_{i}^{n}}(dy)\right] \\
& \quad \leq \frac{1}{\sigma }E\left[ \int_{\{y:f_{i}^{n}(y)\geq
r\}}e^{\sigma \left\Vert y\right\Vert }\mu _{\bar{X}_{i}^{n}}(dy)\right] +%
\frac{1}{\sigma }E\left[ \int_{\{y:f_{i}^{n}(y)\geq r\}}\ell
(f_{i}^{n}(y))\mu _{\bar{X}_{i}^{n}}(dy)\right] .
\end{align*}%
Since $\ell (b)\geq 0$ for all $b\geq 0$ 
\begin{align*}
E\left[ \int_{\left\{ y:f_{i}^{n}\left( y\right) \geq r\right\} }\ell \left(
f_{i}^{n}\left( y\right) \right) \mu _{\bar{X}_{i}^{n}}\left( dy\right) %
\right] & \leq E\left[ \int_{\mathbb{R}^{d}}\ell (f_{i}^{n}(y))\mu _{\bar{X}%
_{i}^{n}}(dy)\right] \\
& =E[R(\left. \eta _{i}^{n}\right\Vert \mu _{\bar{X}_{i}^{n}})],
\end{align*}%
and by Hölder's inequality 
\begin{align*}
& E\left[ \int_{\{y:f_{i}^{n}(y)\geq r\}}e^{\sigma \left\Vert y\right\Vert
}\mu _{\bar{X}_{i}^{n}}(dy)\right] \\
& \quad \leq E\left[ \left( \dint_{\mathbb{R}^{d}}1_{\{f_{i}^{n}(y)\geq
r\}}\mu _{\bar{X}_{i}^{n}}(dy)\right) ^{\frac{1}{2}}\left( \dint_{\mathbb{R}%
^{d}}e^{2\sigma \left\Vert y\right\Vert }\mu _{\bar{X}_{i}^{n}}(dy)\right) ^{%
\frac{1}{2}}\right] \\
& \quad =E\left[ \mu _{\bar{X}_{i}^{n}}(\{y:f_{i}^{n}(y)\geq r\})^{\frac{1}{2%
}}\right] \left( 2^{d}e^{dK_{\text{mgf}}}\right) ^{\frac{1}{2}}.
\end{align*}%
In addition Markov's inequality gives for $r\geq e^{-1}$ 
\begin{equation*}
\mu _{\bar{X}_{i}^{n}}(\{y:f_{i}^{n}(y)\geq r\})\leq \frac{1}{r\log r}\dint
\log (f_{i}^{n}(y))f_{i}^{n}(y)\mu _{\bar{X}_{i}^{n}}(dy)=\frac{R(\left.
\eta _{i}^{n}\right\Vert \mu _{\bar{X}_{i}^{n}})}{r\log r}\text{.}
\end{equation*}%
Therefore 
\begin{align*}
& \frac{1}{n}\sum\limits_{i=0}^{n-1}E\left[ \int_{\{f_{i}^{n}(y)\geq
r\}}\left\Vert y\right\Vert f_{i}^{n}(y)\mu _{\bar{X}_{i}^{n}}(dy)\right] \\
& \quad \leq \frac{1}{\sigma }\left( 2^{d}e^{dK_{\text{mgf}}}\right) ^{\frac{%
1}{2}}\frac{1}{n}\sum\limits_{i=0}^{n-1}E\left[ \left( \frac{R(\left. \eta
_{i}^{n}\right\Vert \mu _{\bar{X}_{i}^{n}})}{r\log r}\right) ^{\frac{1}{2}}%
\right] +\frac{1}{\sigma }\frac{1}{n}\sum\limits_{i=0}^{n-1}E[R(\left. \eta
_{i}^{n}\right\Vert \mu _{\bar{X}_{i}^{n}})].
\end{align*}%
Since by Jensen's inequality 
\begin{equation*}
\frac{1}{n}\sum\limits_{i=0}^{n-1}E\left[ \left( \frac{R(\left. \eta
_{i}^{n}\right\Vert \mu _{\bar{X}_{i}^{n}})}{r\log r}\right) ^{\frac{1}{2}}%
\right] \leq \left( \frac{1}{r\log r}\right) ^{\frac{1}{2}}\left( \frac{1}{n}%
\sum\limits_{i=0}^{n-1}E[R(\left. \eta _{i}^{n}\right\Vert \mu _{\bar{X}%
_{i}^{n}})]\right) ^{\frac{1}{2}},
\end{equation*}%
we obtain the overall bound%
\begin{align}
& \frac{1}{n}\sum\limits_{i=0}^{n-1}E\left[ 1_{\{f_{i}^{n}(\bar{\upsilon}%
_{i}^{n})\geq r\}}\left\Vert \bar{\upsilon}_{i}^{n}\right\Vert \right] 
\notag \\
& \quad \leq \frac{1}{\sigma }\left( 2^{d}e^{dK_{\text{mgf}}}\right) ^{\frac{%
1}{2}}\left( \frac{1}{r\log r}\right) ^{\frac{1}{2}}\left( \frac{1}{n}%
\sum\limits_{i=0}^{n-1}E[R(\left. \eta _{i}^{n}\right\Vert \mu _{\bar{X}%
_{i}^{n}})]\right) ^{\frac{1}{2}}  \notag \\
& \qquad +\frac{1}{\sigma }\frac{1}{n}\sum\limits_{i=0}^{n-1}E[R(\left. \eta
_{i}^{n}\right\Vert \mu _{\bar{X}_{i}^{n}})]  \notag \\
& \quad \leq \frac{1}{\sigma }\frac{K_{E}^{\frac{1}{2}}}{a(n)\sqrt{n}}\left(
2^{d}e^{dK_{\text{mgf}}}\right) ^{\frac{1}{2}}\left( \frac{1}{r\log r}%
\right) ^{\frac{1}{2}}+\frac{1}{\sigma }\frac{K_{E}}{a(n)^{2}n}\text{.}
\label{ineq:bndNoiseLargeRNderiv}
\end{align}

Using this result we can complete the proof. \ Define%
\begin{equation*}
\xi _{i}^{n,r}\doteq \left\{ 
\begin{array}{cc}
\bar{v}_{i}^{n} & \text{if }f_{i}^{n}(\bar{\upsilon}_{i}^{n})<r \\ 
0 & \text{otherwise.}%
\end{array}%
\right.
\end{equation*}%
\ For any for any $\delta >0$%
\begin{align*}
& P\left\{ \max_{k=0,...,n-1}\left\Vert \frac{1}{n}\sum\limits_{i=0}^{k}a(n)%
\sqrt{n}\left( \bar{\upsilon}_{i}^{n}-w^{n}\left( \frac{i}{n}\right) \right)
\right\Vert \geq 3\delta \right\} \\
& \quad \leq P\left\{ \max_{k=0,...,n-1}\left\Vert \frac{1}{n}%
\sum\limits_{i=0}^{k}a(n)\sqrt{n}(\bar{\upsilon}_{i}^{n}-\xi
_{i}^{n,r})\right\Vert \geq \delta \right\} \\
& \qquad +P\left\{ \max_{k=0,...,n-1}\left\Vert \frac{1}{n}%
\sum\limits_{i=0}^{k}a(n)\sqrt{n}\left( \xi
_{i}^{n,r}-\int_{\{y:f_{i}^{n}(y)<r\}}y\eta _{i}^{n}(dy)\right) \right\Vert
\geq \delta \right\} \\
& \qquad +P\left\{ \max_{k=0,...,n-1}\left\Vert \frac{1}{n}%
\sum\limits_{i=0}^{k}a(n)\sqrt{n}\left( w^{n}\left( \frac{i}{n}\right)
-\int_{\{y:f_{i}^{n}(y)<r\}}y\eta _{i}^{n}(dy)\right) \right\Vert \geq
\delta \right\} \text{.}
\end{align*}%
The first term satisfies 
\begin{align*}
& P\left\{ \max_{k=0,...,n-1}\left\Vert \frac{1}{n}\sum\limits_{i=0}^{k}a(n)%
\sqrt{n}(\bar{\upsilon}_{i}^{n}-\xi _{i}^{n,r})\right\Vert \geq \delta
\right\} \\
& \quad \leq \frac{1}{\delta }a(n)\sqrt{n}\frac{1}{n}\sum\limits_{i=0}^{n-1}E%
\left[ \left\Vert \bar{\upsilon}_{i}^{n}-\xi _{i}^{n,r}\right\Vert \right] \\
& \quad =\frac{1}{\delta }a(n)\sqrt{n}\frac{1}{n}\sum\limits_{i=0}^{n-1}E%
\left[ 1_{\{f_{i}^{n}(\bar{\upsilon}_{i}^{n})\geq r\}}\left\Vert \bar{%
\upsilon}_{i}^{n}\right\Vert \right] \text{.}
\end{align*}%
The second term is a submartingale so by Doob's submartingale inequality 
\begin{align*}
& P\left\{ \max_{k=0,...,n-1}\left\Vert \frac{1}{n}\sum\limits_{i=0}^{k}a(n)%
\sqrt{n}\left( \xi _{i}^{n,r}-\int_{\{y:f_{i}^{n}(y)<r\}}y\eta
_{i}^{n}(dy)\right) \right\Vert \geq \delta \right\} \\
& \quad \leq \frac{1}{\delta ^{2}}E\left[ \left\Vert \frac{1}{n}%
\sum\limits_{i=0}^{n-1}a(n)\sqrt{n}\left( \xi
_{i}^{n,r}-\int_{\{y:f_{i}^{n}(y)<r\}}y\eta _{i}^{n}(dy)\right) \right\Vert
^{2}\right] \\
& \quad =\frac{1}{\delta ^{2}}\frac{a(n)^{2}}{n}\sum\limits_{i=0}^{n-1}E%
\left[ \left\Vert \left( \xi _{i}^{n,k}-\int_{\{y:f_{i}^{n}(y)<r\}}y\eta
_{i}^{n}(dy)\right) \right\Vert ^{2}\right] \\
& \quad \leq \frac{1}{\delta ^{2}}\frac{a(n)^{2}}{n}\sum\limits_{i=0}^{n-1}E%
\left[ \left\Vert \xi _{i}^{n,k}\right\Vert ^{2}\right] \\
& \quad =\frac{1}{\delta ^{2}}\frac{a(n)^{2}}{n}\sum\limits_{i=0}^{n-1}E%
\left[ \int_{\{y:f_{i}^{n}(y)<r\}}\left\Vert y\right\Vert
^{2}f_{i}^{n}(y)\mu _{\bar{X}_{i}^{n}}(dy)\right] \\
& \quad \leq \frac{r}{\delta ^{2}}\frac{a(n)^{2}}{n}\sum\limits_{i=0}^{n-1}E%
\left[ \int_{\mathbb{R}^{d}}\left\Vert y\right\Vert ^{2}\mu _{\bar{X}%
_{i}^{n}}(dy)\right] \\
& \quad \leq \frac{r}{\delta ^{2}}a(n)^{2}K_{\mu ,2},
\end{align*}%
where 
\begin{equation*}
K_{\mu ,2}=\sup_{x\in \mathbb{R}^{d}}\dint_{\mathbb{R}^{d}}\left\Vert
y\right\Vert ^{2}\mu _{x}(dy)<\infty ,
\end{equation*}%
and the finiteness is due to (\ref{MGF Bound}). \ We can use Jensen's
inequality with the third term and get the same bound that was shown for the
first. \ We have 
\begin{align*}
& P\left\{ \max_{k=0,...,n-1}\left\Vert \frac{1}{n}\sum\limits_{i=0}^{k}a(n)%
\sqrt{n}\left( w^{n}\left( \frac{i}{n}\right)
-\int_{\{y:f_{i}^{n}(y)<r\}}y\eta _{i}^{n}(dy)\right) \right\Vert \geq
\delta \right\} \\
& \quad \leq \frac{1}{\delta }a(n)\sqrt{n}\frac{1}{n}\sum\limits_{i=0}^{n-1}E%
\left[ \left\Vert \left( w^{n}\left( \frac{i}{n}\right)
-\int_{\{y:f_{i}^{n}(y)<r\}}y\eta _{i}^{n}(dy)\right) \right\Vert \right] \\
& \quad =\frac{1}{\delta }a(n)\sqrt{n}\frac{1}{n}\sum\limits_{i=0}^{n-1}E%
\left[ \left\Vert \int_{\{y:f_{i}^{n}(y)\geq r\}}y\eta
_{i}^{n}(dy)\right\Vert \right] \\
& \quad \leq \frac{1}{\delta }a(n)\sqrt{n}\frac{1}{n}\sum\limits_{i=0}^{n-1}E%
\left[ \int_{\{y:f_{i}^{n}(y)\geq r\}}\left\Vert y\right\Vert \eta
_{i}^{n}(dy)\right] \\
& \quad =\frac{1}{\delta }a(n)\sqrt{n}\frac{1}{n}\sum\limits_{i=0}^{n-1}E%
\left[ 1_{\{f_{i}^{n}(\bar{\upsilon}_{i}^{n})\geq r\}}\left\Vert \bar{%
\upsilon}_{i}^{n}\right\Vert \right] \text{.}
\end{align*}%
Combining the bounds for these three terms with (\ref%
{ineq:bndNoiseLargeRNderiv}) gives%
\begin{align*}
& P\left\{ \max_{k=0,...,n-1}\left\Vert \frac{1}{n}\sum\limits_{i=0}^{k}a(n)%
\sqrt{n}\left( \bar{\upsilon}_{i}^{n}-w^{n}\left( \frac{i}{n}\right) \right)
\right\Vert \geq 3\delta \right\} \\
& \quad \leq \frac{2}{\delta }a(n)\sqrt{n}\frac{1}{n}\sum\limits_{i=0}^{n-1}E%
\left[ 1_{\{f_{i}^{n}(\bar{\upsilon}_{i}^{n})\geq r]}\left\Vert \bar{\upsilon%
}_{i}^{n}\right\Vert \right] +\frac{r}{\delta ^{2}}a(n)^{2}K_{\mu ,2} \\
& \quad \leq \frac{2}{\sigma \delta }K_{E}^{\frac{1}{2}}\left( 2^{d}e^{dK_{%
\text{mgf}}}\right) ^{\frac{1}{2}}\left( \frac{1}{r\log r}\right) ^{\frac{1}{%
2}}+\frac{2}{\sigma \delta }\frac{K_{E}}{a(n)\sqrt{n}}+a(n)^{2}\frac{r}{%
\delta ^{2}}K_{\mu ,2}.
\end{align*}%
Choosing $r=1/a(n)$ and using $a(n)\rightarrow 0,a(n)\sqrt{n}\rightarrow
\infty $ gives 
\begin{equation*}
P\left\{ \max_{k=0,...,n-1}\left\Vert \frac{1}{n}\sum\limits_{i=0}^{k}a(n)%
\sqrt{n}\left( \bar{\upsilon}_{i}^{n}-w^{n}\left( \frac{i}{n}\right) \right)
\right\Vert \geq 3\delta \right\} \rightarrow 0
\end{equation*}%
as $n\rightarrow \infty $, which completes the proof.
\end{proof}

\medskip This theorem, combined with the following discrete version of
Gronwall's inequality, will allow us to prove $\bar{Y}^{n}-\check{Y}%
^{n}\rightarrow 0$.

\begin{lemma}
\label{lemma:Discrete Gronwall}\bigskip \bigskip If $\{a_{n}\}$, $\{b_{n}\}$%
, and $\{c_{n}\}$ are nonnegative sequences defined for $n=0,1,\ldots $ and
satisfying 
\begin{equation*}
a_{n}\leq c_{n}+\dsum\limits_{k=0}^{n-1}b_{k}a_{k},
\end{equation*}%
then 
\begin{equation*}
a_{n}\leq c_{n}+\dsum\limits_{k=0}^{n-1}b_{k}c_{k}\exp \left\{
\dsum\limits_{i=k+1}^{n-1}b_{i}\right\} \text{.}
\end{equation*}
\end{lemma}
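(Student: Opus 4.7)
The plan is to prove this by introducing the partial sum $A_n \doteq \sum_{k=0}^{n-1} b_k a_k$ so that the hypothesis reads $a_n \leq c_n + A_n$, and then deriving a one-step recursion for $A_n$ that can be iterated explicitly. Since $A_{n+1} - A_n = b_n a_n \leq b_n(c_n + A_n)$, we obtain
\begin{equation*}
A_{n+1} \leq b_n c_n + (1+b_n) A_n,
\end{equation*}
with $A_0 = 0$. This linear recursion in $A_n$ has nonnegative coefficients (since all $b_k \geq 0$), so it can be unwound in closed form.

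Next I would iterate the recursion. A straightforward induction on $n$ shows that
\begin{equation*}
A_n \leq \sum_{k=0}^{n-1} b_k c_k \prod_{i=k+1}^{n-1}(1+b_i),
\end{equation*}
where the empty product (when $k = n-1$) is interpreted as $1$. The base case $n=0$ is trivial since both sides vanish, and the induction step uses $A_{n+1} \leq b_n c_n + (1+b_n) A_n$ together with the inductive bound on $A_n$, noting that multiplying each term in the sum by $(1+b_n)$ extends the product up to $i = n$.

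To convert the product into the exponential form stated in the lemma, I would apply the elementary inequality $1+x \leq e^x$ for $x \geq 0$ to each factor $1+b_i$, which yields
\begin{equation*}
\prod_{i=k+1}^{n-1}(1+b_i) \leq \exp\left\{\sum_{i=k+1}^{n-1} b_i\right\}.
\end{equation*}
Combining this with $a_n \leq c_n + A_n$ gives exactly the claimed bound.

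There is no real obstacle here: the argument is essentially the discrete analogue of the standard proof of the Gronwall inequality, where iteration of a linear recursion replaces integration of a linear ODE. The only point requiring a small amount of care is the bookkeeping in the inductive step, namely checking that multiplying the inductive bound by $(1+b_n)$ and adding $b_n c_n$ really does produce the stated sum with the product indexed up to $n$, and that the empty-product conventions at the endpoints match up correctly.
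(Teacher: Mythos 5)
Your argument is correct. The paper states this discrete Gronwall lemma without proof (it is a standard result), so there is no paper proof to compare against; your derivation via the auxiliary partial sums $A_n=\sum_{k=0}^{n-1}b_k a_k$, the one-step recursion $A_{n+1}\leq b_n c_n+(1+b_n)A_n$, induction to get the product bound, and then $1+x\leq e^x$ is exactly the standard route and all the bookkeeping (empty sums and products, index shifts) checks out.
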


\begin{theorem}
\label{th:hat-bar}Under the conditions of Theorem \ref{th:Wconv_in_prob} $%
\check{Y}^{n}-\bar{Y}^{n}\rightarrow 0$ in probability.
\end{theorem}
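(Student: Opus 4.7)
The plan is to reduce the statement to a discrete Gronwall inequality whose inhomogeneous term is precisely the running sum $W_k^n$ controlled by Theorem \ref{th:Wconv_in_prob}.

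First I would set $D_i^n \doteq \bar{Y}_i^n - \check{Y}_i^n$ and observe that, by the very definition of $\bar{Y}^n$ in (\ref{eq:defYbar}), one has the identity $\bar{X}_i^n = X_i^{n,0} + \frac{1}{a(n)\sqrt{n}}\bar{Y}_i^n$. Thus the only differences between the recursions for $\bar{Y}^n$ and $\check{Y}^n$ are (i) the argument inside $b$ is $\bar{Y}_i^n$ versus $\check{Y}_i^n$, and (ii) the driving increment is $\bar{\upsilon}_i^n$ versus its conditional mean $w^n(i/n)$. Subtracting gives
\begin{equation*}
D_{i+1}^n = D_i^n + \frac{a(n)}{\sqrt{n}}\Bigl[b\bigl(X_i^{n,0}+\tfrac{\bar{Y}_i^n}{a(n)\sqrt{n}}\bigr)-b\bigl(X_i^{n,0}+\tfrac{\check{Y}_i^n}{a(n)\sqrt{n}}\bigr)\Bigr] + \frac{a(n)}{\sqrt{n}}\bigl(\bar{\upsilon}_i^n - w^n(i/n)\bigr).
\end{equation*}
The Lipschitz bound $\|Db\|\leq K_b$ forces the amplification factor $1/(a(n)\sqrt{n})$ to cancel cleanly: the $b$-difference has norm at most $K_b \|D_i^n\|/(a(n)\sqrt{n})$, which after multiplication by $a(n)/\sqrt{n}$ becomes $K_b\|D_i^n\|/n$. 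Summing from $i=0$ to $k$, and recognizing that the telescoped noise contribution is precisely $W_{k+1}^n$, I obtain
\begin{equation*}
\|D_{k+1}^n\| \leq \|W_{k+1}^n\| + \frac{K_b}{n}\sum_{i=0}^{k}\|D_i^n\|.
\end{equation*}

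Next I would set $M^n \doteq \max_{j\in\{1,\ldots,n\}}\|W_j^n\|$, so that $\|W_{k+1}^n\|\leq M^n$ uniformly in $k$, and apply Lemma \ref{lemma:Discrete Gronwall} with $c_k \equiv M^n$ and $b_k = K_b/n$. Since $\sum_{i=0}^{n-1}b_i = K_b$, the lemma delivers $\max_{k \leq n}\|D_k^n\| \leq M^n(1 + K_b e^{K_b})$. Because $\bar{Y}^n - \check{Y}^n$ is the piecewise linear interpolation of the discrete sequence $\{D_k^n\}$, its sup norm on $[0,1]$ enjoys the same bound. Theorem \ref{th:Wconv_in_prob} asserts $M^n \to 0$ in probability, and this yields $\sup_{t\in[0,1]}\|\bar{Y}^n(t) - \check{Y}^n(t)\| \to 0$ in probability, as desired.

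There is no real obstacle here once the structural cancellation in the first step is noticed: the $1/(a(n)\sqrt{n})$ scaling in the rescaled increment and the $a(n)\sqrt{n}$ amplification in the definition of $\bar{Y}^n$ exactly offset inside the Lipschitz estimate, reducing the problem to a deterministic Gronwall argument. All of the probabilistic content has already been absorbed into Theorem \ref{th:Wconv_in_prob}; the present theorem is essentially its deterministic packaging.
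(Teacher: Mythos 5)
Your proposal is correct and follows essentially the same route as the paper: subtract the two recursions, use the Lipschitz bound on $b$ so that the $1/(a(n)\sqrt{n})$ factors cancel, sum to obtain $\|D_{k}^n\| \le \|W_{k}^n\| + \frac{K_b}{n}\sum_{i<k}\|D_i^n\|$, and close with Lemma~\ref{lemma:Discrete Gronwall} and Theorem~\ref{th:Wconv_in_prob}. The only cosmetic difference is that you invoke the discrete Gronwall lemma with the constant inhomogeneous term $c_k\equiv M^n=\max_j\|W_j^n\|$, whereas the paper uses $c_k=\|W_k^n\|$ and takes the maximum afterward; both yield the same bound $(1+K_b e^{K_b})M^n$.
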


\begin{proof}
Recall that%
\begin{equation*}
\bar{Y}_{k}^{n}=\dsum\limits_{i=0}^{k-1}\frac{a(n)}{\sqrt{n}}\left( b\left(
X_{i}^{n,0}+\frac{1}{a(n)\sqrt{n}}\bar{Y}_{i}^{n}\right) -b\left(
X_{i}^{n,0}\right) \right) +\dsum\limits_{i=0}^{k-1}\frac{a(n)}{\sqrt{n}}%
\bar{\upsilon}_{i}^{n}
\end{equation*}%
and%
\begin{equation*}
\check{Y}_{k}^{n}=\dsum\limits_{i=0}^{k-1}\frac{a(n)}{\sqrt{n}}\left(
b\left( X_{i}^{n,0}+\frac{1}{a(n)\sqrt{n}}\check{Y}_{i}^{n}\right) -b\left(
X_{i}^{n,0}\right) \right) +\dsum\limits_{i=0}^{k-1}\frac{a(n)}{\sqrt{n}}%
w^{n}\left( \frac{i}{n}\right) ,
\end{equation*}%
so with $W_{k}^{n}$ defined as in Theorem \ref{th:Wconv_in_prob} 
\begin{equation*}
\left\Vert \bar{Y}_{k}^{n}-\check{Y}_{k}^{n}\right\Vert \leq \left\Vert
W_{k}^{n}\right\Vert +\dsum\limits_{i=0}^{k-1}\frac{K_{b}}{n}\left\Vert \bar{%
Y}_{i}^{n}-\check{Y}_{i}^{n}\right\Vert \text{.}
\end{equation*}%
Using Lemma \ref{lemma:Discrete Gronwall} gives 
\begin{align*}
\left\Vert \bar{Y}_{k}^{n}-\check{Y}_{k}^{n}\right\Vert & \leq \left\Vert
W_{k}^{n}\right\Vert +\dsum\limits_{i=0}^{k-1}\left\Vert
W_{i}^{n}\right\Vert \frac{K_{b}}{n}\exp \left\{ \frac{K_{b}}{n}%
(k-i-1)\right\} \\
& \leq (1+K_{b}e^{K_{b}})\max_{i\in \{1,\ldots ,k\}}\{\left\Vert
W_{i}^{n}\right\Vert \}
\end{align*}%
so 
\begin{equation*}
\max_{i\in \{1,\ldots ,n\}}\left\{ \left\Vert \bar{Y}_{i}^{n}-\check{Y}%
_{i}^{n}\right\Vert \right\} \leq (1+K_{b}e^{K_{b}})\max_{i\in \{1,\ldots
,n\}}\{\left\Vert W_{i}^{n}\right\Vert \}\text{.}
\end{equation*}%
Since $\max_{i\in \{1,\ldots ,n\}}\{\left\Vert W_{i}^{n}\right\Vert
\}\rightarrow 0$ in probability 
\begin{equation*}
\max_{i\in \{1,\ldots ,n\}}\left\{ \left\Vert \bar{Y}_{i}^{n}-\check{Y}%
_{i}^{n}\right\Vert \right\} \rightarrow 0\text{ and hence }\sup_{t\in
\lbrack 0,1]}\left\Vert \bar{Y}^{n}(t)-\check{Y}^{n}(t)\right\Vert
\rightarrow 0
\end{equation*}%
in probability.
\end{proof}

\medskip To complete the proof of the convergence we need to show $\check{Y}%
^{n}-\hat{Y}^{n}\rightarrow 0$. \ Recall that these two processes have the
same driving terms but different drifts, in that $\hat{Y}^{n}$ satisfies the
ODE 
\begin{equation*}
\hat{Y}^{n}(t)=\int_{0}^{t}Db(X^{0}(s))\hat{Y}^{n}(s)ds+\int_{0}^{t}\hat{w}%
^{n}(s)ds
\end{equation*}%
while $\check{Y}^{n}$ is the linear interpolation of the discrete time
process defined by $\check{Y}_{0}^{n}=0$ and%
\begin{equation}
\check{Y}_{i+1}^{n}=\check{Y}_{i}^{n}+\frac{a(n)}{\sqrt{n}}\left( b\left(
X_{i}^{n,0}+\frac{1}{a(n)\sqrt{n}}\check{Y}_{i}^{n}\right) -b\left(
X_{i}^{n,0}\right) \right) +\frac{1}{n}\hat{w}^{n}\left( \frac{i}{n}\right) .
\notag
\end{equation}%
However, essentially the same arguments as those used in Lemma \ref%
{lem:tight_Y_W} to show tightness of $\{\hat{Y}^{n}\}$ can be used to prove
tightness of $\{\check{Y}^{n}\}$, and then it easily follows as in Lemma \ref%
{lem:limit_Y} that any limit will satisfy the same ODE (\ref{defYhatLim}) as
the limit of $\{\hat{Y}^{n}\}$, and therefore $\check{Y}^{n}-\hat{Y}%
^{n}\rightarrow 0$ follows.

Combining $\bar{Y}^{n}-\check{Y}^{n}\rightarrow 0$, $\check{Y}^{n}-\hat{Y}%
^{n}\rightarrow 0$, and $\hat{Y}^{n}\rightarrow \hat{Y}$ demonstrates that
along the subsequence where $\hat{\eta}^{n}\rightarrow \hat{\eta}$ weakly $%
\bar{Y}^{n}\rightarrow \hat{Y}$ in distribution, which implies that along
this subsequence $(\hat{\eta}^{n},\bar{Y}^{n})\rightarrow (\hat{\eta},\hat{Y}%
)$ weakly. \ We have already shown that with probability 1 $\hat{\eta}%
_{2}(dt)$ is Lebesgue measure and 
\begin{equation*}
\hat{Y}(t)=\int_{0}^{t}Db(X^{0}(s))\hat{Y}(s)ds+\int_{0}^{t}\int_{\mathbb{R}%
^{d}}y\hat{\eta}_{\left. 1\right\vert 2}(dy\left\vert t\right. )ds,
\end{equation*}%
so the proof of convergence (i.e., the first part of Theorem \ref%
{th:Tightness}) is complete.

To finish Theorem \ref{th:Tightness} we must lastly show the bound (\ref%
{eqn:lower_bound_costs}). Note that the weak convergence of $\bar{Y}^{n}$
implies 
\begin{equation}
\sup_{t\in \lbrack 0,1]}\left\Vert \bar{X}^{n}(\left\lfloor nt\right\rfloor
/n)-X^{0}(t)\right\Vert \rightarrow 0\text{ in probability.}
\label{convLargeProc}
\end{equation}%
Now define random measures on $\mathbb{R}^{d}\otimes \mathbb{R}^{d}\otimes %
\left[ 0,1\right] $ by 
\begin{equation*}
\gamma ^{n}\left( dx\otimes dy\otimes dt\right) =\delta _{\bar{X}^{n}\left(
\left\lfloor nt\right\rfloor /n\right) }\left( dx\right) \hat{\eta}%
^{n}\left( dy\otimes dt\right) \text{.}
\end{equation*}%
Note that the tightness of $\left\{ \gamma ^{n}\right\} $ follows easily
from (\ref{convLargeProc}) and from the tightness of $\left\{ \hat{\eta}%
^{n}\right\} $. Thus given any subsequence we can choose a further
subsequence (again we will retain $n$ as the index for simplicity) along
which $\left\{ \gamma ^{n}\right\} $ converges weakly to some limit $\gamma $
on $\mathcal{P}\left( \mathbb{R}^{d}\otimes \mathbb{R}^{d}\otimes \left[ 0,1%
\right] \right) $ with 
\begin{equation*}
\gamma _{2,3}\left( dy\otimes dt\right) =\hat{\eta}\left( dy\otimes
dt\right) \text{,}
\end{equation*}%
where $\gamma _{2,3}$ is the second and third marginal of $\gamma $. \ If we
establish (\ref{eqn:lower_bound_costs}) for this subsequence it follows for
the original one using a standard argument by contradiction. For $\sigma >0$
let 
\begin{equation*}
G_{\sigma }^{X^{0}}=\left\{ \left( x,y,t\right) :\left\Vert x-X^{0}\left(
t\right) \right\Vert \leq \sigma \right\}
\end{equation*}%
be closed sets centered around $X^{0}\left( t\right) $ in the $x$ variable,
and note that by (\ref{convLargeProc}) and weak convergence, for all $\sigma
>0$ 
\begin{equation*}
1=\limsup_{n\rightarrow \infty }E\left[ \gamma ^{n}\left( G_{\sigma
}^{X^{0}}\right) \right] \leq E\left[ \gamma \left( G_{\sigma
}^{X^{0}}\right) \right] \text{.}
\end{equation*}%
Thus 
\begin{equation*}
E\left[ \gamma \left( \cap _{n\in \mathbb{N}}G_{1/n}^{X^{0}}\right) \right]
=1,
\end{equation*}%
so with probability 1 $\gamma $ puts all its mass on $\left\{ \left(
x,y,t\right) :x=X^{0}\left( t\right) \right\} $. \ Therefore with
probability 1, for a.e. $\left( y,t\right) $ under $\gamma _{2,3}\left(
dy\otimes dt\right) $, 
\begin{equation*}
\gamma _{1\left\vert 2,3\right. }\left( \left. dx\right\vert y,t\right)
=\delta _{X^{0}\left( t\right) }\left( dx\right) \text{.}
\end{equation*}%
Combined with the fact that the second marginal of $\hat{\eta}\left(
dy\otimes dt\right) $\ is Lebesgue measure, this gives 
\begin{equation}
\gamma \left( dx\otimes dy\otimes dt\right) =\delta _{X^{0}\left( t\right)
}\left( dx\right) \hat{\eta}\left( \left. dy\right\vert t\right) dt\text{.}
\label{eq:gammaDecomp}
\end{equation}

Let 
\begin{equation*}
\bar{L}_{K}\left( x,\beta \right) =\sup_{\alpha \in \mathbb{R}^{d}}\left\{
\left\langle \alpha ,\beta \right\rangle -\frac{1}{2}\left\Vert \alpha
\right\Vert _{A(x)}^{2}-\frac{1}{2K}\left\Vert \alpha \right\Vert
^{2}\right\} .
\end{equation*}%
Then uniformly in $x$ and compact subsets of $\beta $ 
\begin{equation*}
\liminf_{n\rightarrow \infty }a(n)^{2}nL_{c}\left( x,\frac{1}{a(n)\sqrt{n}}%
\beta \right) \geq \bar{L}_{K}\left( x,\beta \right) ,
\end{equation*}%
and as $K\rightarrow \infty $ 
\begin{equation*}
\bar{L}_{K}\left( x,\beta \right) \uparrow \frac{1}{2}\left\Vert \beta
\right\Vert _{A^{-1}(x)}^{2}
\end{equation*}%
for all $\left( x,\beta \right) \in \mathbb{R}^{2d}$. \ \ Combining this
with Lemma \ref{Lemma:RelEntLegIneq}\ and using Fatou's lemma for weak
convergence, 
\begin{align*}
& \liminf_{n\rightarrow \infty }a(n)^{2}nE\left[ \dsum\limits_{i=0}^{n-1}%
\frac{1}{n}R(\left. \eta _{i}^{n}\right\Vert \mu _{\bar{X}_{i}^{n}})\right]
\\
\quad & \quad \geq \liminf_{n\rightarrow \infty }E\left[ \int_{\mathbb{R}%
^{d}\otimes \mathbb{R}^{d}\otimes \left[ 0,1\right] }a(n)^{2}nL_{c}\left( x,%
\frac{1}{a(n)\sqrt{n}}y\right) \gamma ^{n}\left( dx\otimes dy\otimes
dt\right) \right] \\
& \quad \geq E\left[ \int_{\mathbb{R}^{d}\otimes \mathbb{R}^{d}\otimes \left[
0,1\right] }\bar{L}_{K}\left( x,y\right) \gamma \left( dx\otimes dy\otimes
dt\right) \right]
\end{align*}%
for all $K$. \ Then using the monotone convergence theorem, the
decomposition (\ref{eq:gammaDecomp}), and Jensen's inequality in that order
shows that 
\begin{align*}
& \liminf_{n\rightarrow \infty }a(n)^{2}nE\left[ \dsum\limits_{i=0}^{n-1}%
\frac{1}{n}R(\left. \eta _{i}^{n}\right\Vert \mu _{\bar{X}_{i}^{n}})\right]
\\
& \quad \geq \lim_{K\rightarrow \infty }E\left[ \int_{\mathbb{R}^{d}\otimes 
\mathbb{R}^{d}\otimes \left[ 0,1\right] }\bar{L}_{K}\left( x,y\right) \gamma
\left( dx\otimes dy\otimes dt\right) \right] \\
& \quad =E\left[ \int_{\mathbb{R}^{d}\otimes \mathbb{R}^{d}\otimes \left[ 0,1%
\right] }\frac{1}{2}\left\Vert y\right\Vert _{A^{-1}(x)}^{2}\gamma \left(
dx\otimes dy\otimes dt\right) \right] \\
& \quad =E\left[ \int_{0}^{1}\int_{\mathbb{R}^{d}}\frac{1}{2}\left\Vert
y\right\Vert _{A^{-1}(X^{0}\left( t\right) )}^{2}\hat{\eta}\left( \left.
dy\right\vert t\right) dt\right] \\
& \quad \geq E\left[ \frac{1}{2}\int_{0}^{1}\left\Vert \hat{w}(t)\right\Vert
_{A^{-1}(X^{0}(t))}^{2}dt\right] ,
\end{align*}%
which is (\ref{eqn:lower_bound_costs}).

\section{Laplace Upper Bound}

The goal of this section is to prove (\ref{eq:Laplace Lower}), which due to
the minus sign corresponds to the Laplace upper bound. \ Suppose for each $n$
that $\eta ^{n}$ comes within $\varepsilon $ of achieving the infimum in (%
\ref{eq:rep}), so that 
\begin{align}
& \liminf_{n\rightarrow \infty }-a(n)^{2}\log E\left[ e^{-\frac{1}{a(n)^{2}}%
F(Y^{n})}\right] +\varepsilon  \notag \\
& \quad \geq \liminf_{n\rightarrow \infty }E\left[ \dsum%
\limits_{i=0}^{n-1}a(n)^{2}R(\left. \eta _{i}^{n}\right\Vert \mu _{\bar{X}%
_{i}^{n}})+F(\bar{Y}^{n})\right] .  \label{ineq:epNearMin}
\end{align}%
\ Since $\sup_{x\in \mathbb{R}^{d}}\left\vert F(x)\right\vert \leq K_{F}$
for some $K_{F}<\infty $, we also have%
\begin{equation*}
\sup_{n}a(n)^{2}nE\left[ \dsum\limits_{i=0}^{n-1}\frac{1}{n}R(\left. \eta
_{i}^{n}\right\Vert \mu _{\bar{X}_{i}^{n}})\right] \leq 2K_{F}+\varepsilon .
\end{equation*}%
Consequently we can choose a subsequence of $\{\eta ^{n}\}$ (we retain $n$
as the index for convenience) along which the conclusions of Theorem \ref%
{th:Tightness} hold. \ Combining this with (\ref{ineq:epNearMin}) gives%
\begin{align*}
& \liminf_{n\rightarrow \infty }-a(n)^{2}\log E\left[ e^{-\frac{1}{a(n)^{2}}%
F(Y^{n})}\right] +\varepsilon \\
& \quad \geq \liminf_{n\rightarrow \infty }E\left[ \dsum%
\limits_{i=0}^{n-1}a(n)^{2}R(\left. \eta _{i}^{n}\right\Vert \mu _{\bar{X}%
_{i}^{n}})+F(\bar{Y}^{n})\right] \\
& \quad \geq E\left[ \dint_{0}^{1}\frac{1}{2}\left\Vert \hat{w}%
(s)\right\Vert _{A^{-1}(X^{0}(s))}^{2}ds+F(\hat{Y})\right] \text{.}
\end{align*}%
Recalling%
\begin{equation*}
\hat{Y}(t)=\int_{0}^{t}Db(X^{0}(s))\hat{Y}(s)ds+\int_{0}^{t}\hat{w}(s)ds,
\end{equation*}%
it follows that 
\begin{align*}
& E\left[ \dint_{0}^{1}\frac{1}{2}\left\Vert \hat{w}(s)\right\Vert
_{A^{-1}(X^{0}(s))}^{2}ds+F(\hat{Y})\right] \\
\quad & \geq \inf_{u\in L^{2}([0,1]:\mathbb{R}^{d})}\left\{ \dint_{0}^{1}%
\frac{1}{2}\left\Vert u(s)\right\Vert _{A^{-1}(X^{0}(s))}^{2}ds+F(\phi
^{u})\right\} \\
\quad & =\inf_{u\in L^{2}([0,1]:\mathbb{R}^{d})}\left\{ \dint_{0}^{1}\frac{1%
}{2}\left\Vert u(s)\right\Vert ^{2}ds+F\left( \phi ^{A^{1/2}(X^{0})u}\right)
\right\} ,
\end{align*}%
with $\phi ^{u}$ defined as in (\ref{phiDefined}). Since $\varepsilon >0$ is
arbitrary, we have the lower bound (\ref{eq:Laplace Lower}).

\section{Laplace Lower Bound}

The goal of this section is to prove (\ref{eq: Laplace Upper}). \ Note that
for $u,v\in L^{2}([0,1]:\mathbb{R}^{d})$%
\begin{align*}
& \phi ^{A^{1/2}(X^{0})u}(t)-\phi ^{A^{1/2}(X^{0})v}(t) \\
& \quad =\int_{0}^{t}Db(X^{0}(s))\left( \phi ^{A^{1/2}(X^{0})u}(s)-\phi
^{A^{1/2}(X^{0})v}(s)\right) ds \\
& \qquad +\int_{0}^{t}A^{1/2}(X^{0}(s))(u(s)-v(s))ds.
\end{align*}%
Thus by Gronwall's inequality 
\begin{align}
& \sup_{t\in \lbrack 0,1]}\left\Vert \phi ^{A^{1/2}(X^{0})u}(t)-\phi
^{A^{1/2}(X^{0})v}(t)\right\Vert  \notag \\
& \quad \leq (1+K_{b}e^{K_{b}})K_{A}^{1/2}\int_{0}^{1}\left\Vert
u(s)-v(s)\right\Vert ds  \notag \\
& \quad \leq (1+K_{b}e^{K_{b}})K_{A}^{1/2}\left( \int_{0}^{1}\left\Vert
u(s)-v(s)\right\Vert ^{2}ds\right) ^{\frac{1}{2}}\text{.}
\label{eq:L2ContPhi}
\end{align}%
Since $C([0,1]:\mathbb{R}^{d})$\ is dense in $L^{2}([0,1]:\mathbb{R}^{d})$,
the proof of the Laplace lower bound is reduced to showing that for an
arbitrary $u\in C([0,1]:\mathbb{R}^{d})$ 
\begin{equation}
\limsup_{n\rightarrow \infty }-a(n)^{2}\log E\left[ e^{-\frac{1}{a(n)^{2}}%
F(Y^{n})}\right] \leq \frac{1}{2}\int_{0}^{1}\left\Vert u(s)\right\Vert
^{2}ds+F\left( \phi ^{A^{1/2}(X^{0})u}\right) \text{.}  \label{eq:last}
\end{equation}

The main difficulty is to deal with the possible degeneracy of the noise.
Recall the orthogonal decomposition of $A^{-1}(x)$ (\ref{eq:DiffInvDecomp}).
\ Define%
\begin{equation*}
A_{K}^{-1}(x)=Q(x)\Lambda _{K}^{-1}(x)Q^{T}(x)
\end{equation*}%
where $\Lambda _{K}^{-1}(x)$ is the diagonal matrix such that $\Lambda
_{ii,K}^{-1}(x)=\Lambda _{ii}^{-1}(x)$ when $\Lambda _{ii}^{-1}(x)\leq K^{2}$
and $\Lambda _{ii,K}^{-1}(x)=K^{2}$ when $\Lambda _{ii}^{-1}(x)>K^{2}$. \
Note that by \cite[Theorem 6.2.37]{horjoh} $A^{1/2}(x)$, $A_{K}^{-1}(x)$ and 
$A_{K}^{1/2}(x)$ are continuous functions of $A(x)$, and consequently they
are also continuous functions of $x\in \mathbb{R}^{d}$. \ In addition define%
\begin{equation*}
u_{K}(s)=\left\{ 
\begin{array}{cc}
u(s) & \text{for }\left\Vert u(s)\right\Vert \leq K \\ 
\frac{Ku(s)}{\left\Vert u(s)\right\Vert } & \text{for }\left\Vert
u(s)\right\Vert >K%
\end{array}%
\right. \text{ .}
\end{equation*}%
Let $\phi ^{u,K}(t)=\phi ^{A(X^{0})A_{K}^{-1/2}(X^{0})u_{K}}(t)$, and note
that $\phi ^{u,K}$ solves \ 
\begin{align}
\phi ^{u,K}(t)& =\int_{0}^{t}Db(X^{0}(s))\phi ^{u,K}(s)ds  \notag \\
& \quad +\int_{0}^{t}A(X^{0}(s))A_{K}^{-1/2}(X^{0}(s))u_{K}(s)ds\text{.}
\label{eq:PhiuK}
\end{align}

To simplify notation we define $s_{i}^{n}\doteq i/n$ and $%
s^{n}(t)=\left\lfloor nt\right\rfloor /n$, where $\left\lfloor
a\right\rfloor $ is the integer part of $a$. Note that $s^{n}(t)-t%
\rightarrow 0$ uniformly for $t\in \lbrack 0,1]$ as $n\rightarrow \infty $.
For $n$ sufficiently large 
\begin{equation*}
\max_{0\leq i\leq n-1}\left\{ \frac{1}{a(n)\sqrt{n}}\left\Vert
A_{K}^{-1/2}\left( X^{0}\left( s_{i}^{n}\right) \right) u_{K}\left(
s_{i}^{n}\right) \right\Vert \right\} \leq \frac{1}{a(n)\sqrt{n}}K^{2}\leq
\lambda _{DA}
\end{equation*}%
and we can define the sequence $\{(\bar{X}^{n,u,K},\bar{Y}^{n,u,K},\eta
^{n,u,K},\hat{\eta}^{n,u,K})\}$ as in Construction \ref{tightnessNotation}
with%
\begin{align*}
& \eta _{i}^{n,u,K}(dy) \\
& \quad =\exp \left\{ \left\langle y,\frac{1}{a(n)\sqrt{n}}%
A_{K}^{-1/2}\left( X^{0}\left( s_{i}^{n}\right) \right) u_{K}\left(
s_{i}^{n}\right) \right\rangle \right. \\
& \qquad \qquad \left. -H_{c}\left( \bar{X}_{i}^{n,u,K},\frac{1}{a(n)\sqrt{n}%
}A_{K}^{-1/2}\left( X^{0}\left( s_{i}^{n}\right) \right) u_{K}\left(
s_{i}^{n}\right) \right) \right\} \mu _{\bar{X}_{i}^{n,u,K}}(dy)\text{.}
\end{align*}%
\ Using (\ref{eq:H3rdDerivBnd}) and the fact that 
\begin{equation*}
\dint_{\mathbb{R}^{d}}y\exp \{\left\langle y,\alpha \right\rangle
-H_{c}(x,\alpha )\}\mu _{x}(dy)=D_{\alpha }H_{c}(x,\alpha )
\end{equation*}%
we have for $\left\Vert \alpha \right\Vert \leq \lambda _{DA}$ 
\begin{equation}
\left\Vert \dint_{\mathbb{R}^{d}}y\exp \{\left\langle y,\alpha \right\rangle
-H_{c}(x,\alpha )\}\mu _{x}(dy)-A(x)\alpha \right\Vert \leq K_{DA}\left\Vert
\alpha \right\Vert ^{2}\text{.}  \label{Derivative Bound}
\end{equation}%
The next result identifies the limit in probability of the controlled
processes and an asymptotic bound for the relative entropies.

\begin{theorem}
\label{th:ConvToPhiK}Let $u\in C(\left[ 0,1\right] :\mathbb{R}^{d})$ and $%
K<\infty $ be given, construct $\{(\bar{X}^{n,u,K},\bar{Y}^{n,u,K},\eta
^{n,u,K},\hat{\eta}^{n,u,K})\}$ as in this section and define $\phi ^{u,K}$
by (\ref{eq:PhiuK}). Then 
\begin{equation}
\bar{Y}^{n,u,K}\rightarrow \phi ^{u,K}  \label{eq:convofY}
\end{equation}%
in $C([0,1]:\mathbb{R}^{d})$ in probability, and 
\begin{align}
& \limsup_{n\rightarrow \infty }a^{2}(n)nE\left[ \frac{1}{n}%
\dsum\limits_{i=0}^{n-1}R\left( \left. \eta _{i}^{n,u,K}\right\Vert \mu _{%
\bar{X}_{i}^{n,u,K}}\right) \right]  \notag \\
& \quad \leq \frac{1}{2}\int_{0}^{1}\left\Vert
A_{K}^{-1/2}(X^{0}(s))u_{K}(s)\right\Vert _{A(X^{0}(s))}^{2}ds.
\label{eq:REbound2}
\end{align}
\end{theorem}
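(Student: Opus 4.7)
The plan is to exploit that $\eta_i^{n,u,K}$ is an explicit exponential tilt of $\mu_{\bar{X}_i^{n,u,K}}$ by the vector $\alpha_i^n \doteq \frac{1}{a(n)\sqrt{n}} A_K^{-1/2}(X^0(s_i^n)) u_K(s_i^n)$. Since $\|u_K\| \leq K$ and the eigenvalues of $A_K^{-1/2}$ are bounded by $K$, one has $\|\alpha_i^n\| \leq K^2/(a(n)\sqrt{n})$, which is at most $\lambda_{DA}$ for $n$ large. Hence (\ref{eq:H-bound}) and its gradient analog (\ref{Derivative Bound}) apply uniformly in $i$. A direct computation from the Radon--Nikodym formula gives $w^n(s_i^n) = D_\alpha H_c(\bar{X}_i^{n,u,K},\alpha_i^n)$ and
\begin{equation*}
R(\eta_i^{n,u,K}\|\mu_{\bar{X}_i^{n,u,K}}) = \langle \alpha_i^n, D_\alpha H_c(\bar{X}_i^{n,u,K},\alpha_i^n)\rangle - H_c(\bar{X}_i^{n,u,K},\alpha_i^n) = \tfrac{1}{2}\|\alpha_i^n\|^2_{A(\bar{X}_i^{n,u,K})} + O(K_{DA}\|\alpha_i^n\|^3).
\end{equation*}

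For the entropy bound (\ref{eq:REbound2}), I would multiply by $a(n)^2 n$ and average. The cubic remainder contributes at most $a(n)^2 \cdot n \cdot O((a(n)\sqrt{n})^{-3}) = O(1/(a(n)\sqrt{n})) \to 0$ by (\ref{a(n) behavior}). The main term reduces to the Riemann-type sum $\frac{1}{2n}\sum_i \|A_K^{-1/2}(X^0(s_i^n))u_K(s_i^n)\|^2_{A(\bar{X}_i^{n,u,K})}$, uniformly bounded by $\tfrac{1}{2}K^4 K_A$. With this a priori bound in hand, Theorem \ref{th:Tightness} applies and in particular implies $\bar{X}^n(\cdot) \to X^0(\cdot)$ uniformly in probability (since $\bar{X}^n = X^{n,0} + \bar{Y}^n/(a(n)\sqrt{n})$ with $\bar{Y}^n$ tight and $a(n)\sqrt{n}\to\infty$). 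Continuity of $A$, $A_K^{-1/2}$ and $u_K$ then lets the Riemann sum converge pointwise, and dominated convergence yields the bound after taking expectation.

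For (\ref{eq:convofY}), the same a priori entropy bound makes Theorem \ref{th:Tightness} give that every subsequential weak limit $(\hat{\eta},\hat{Y})$ satisfies $\hat{Y}(t) = \int_0^t Db(X^0(s))\hat{Y}(s)\,ds + \int_0^t \hat{w}(s)\,ds$ with $\hat{w}(t) = \int y\,\hat{\eta}_{1|2}(dy|t)$. To identify $\hat{w}$, I would use (\ref{Derivative Bound}) to write
\begin{equation*}
\hat{w}^n(s_i^n) = a(n)\sqrt{n}\, w^n(s_i^n) = A(\bar{X}_i^{n,u,K}) A_K^{-1/2}(X^0(s_i^n)) u_K(s_i^n) + O(K_{DA} K^4/(a(n)\sqrt{n})),
\end{equation*}
so $\hat{\eta}^n$ converges in probability to the deterministic measure $\delta_{A(X^0(t))A_K^{-1/2}(X^0(t))u_K(t)}(dy)\,dt$, forcing $\hat{w}(t) = A(X^0(t))A_K^{-1/2}(X^0(t))u_K(t)$ almost surely and $\hat{Y} = \phi^{u,K}$ by uniqueness of the linear ODE (\ref{eq:PhiuK}). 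Since the limit is deterministic and subsequence-independent, convergence in probability follows. The main technical obstacle is ensuring that the cubic $O(\|\alpha_i^n\|^3)$ remainder terms from the two Taylor expansions are controlled uniformly in $i$ and $n$, which is exactly what the shrinking radius $K^2/(a(n)\sqrt{n})$ combined with (\ref{eq:H3rdDerivBnd}) is designed to achieve.
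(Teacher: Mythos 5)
Your proposal is correct and follows essentially the same line of reasoning as the paper's proof: you use the explicit exponential-tilt form of $\eta_i^{n,u,K}$ together with (\ref{eq:H-bound}) and (\ref{Derivative Bound}) to control the relative entropy up to an $O(\|\alpha_i^n\|^3)$ remainder, verify that (\ref{cond:relEndBnd}) holds, invoke Theorem \ref{th:Tightness} for tightness and convergence of $\bar{X}^{n,u,K}$ to $X^0$, pass to the Riemann sum limit by continuity and dominated convergence for (\ref{eq:REbound2}), and identify $\hat{\eta}^{u,K}$ as the deterministic Dirac measure concentrated on $A(X^0(t))A_K^{-1/2}(X^0(t))u_K(t)$ to obtain (\ref{eq:convofY}) via the uniqueness of the linear ODE and the subsequence principle. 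The paper spells out the Dirac-measure identification through the closed-set argument with $G_\sigma$ and a portmanteau-type inequality, but this is the same idea as your convergence-in-probability claim for $\hat{\eta}^n$.
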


\begin{proof}
From (\ref{eq:H-bound}) and (\ref{Derivative Bound}) we have for all $n$
sufficiently large that $\frac{1}{a(n)\sqrt{n}}K^{2}\leq \lambda _{DA}$ 
\begin{align*}
& R\left( \left. \eta _{i}^{n,u,K}\right\Vert \mu _{\bar{X}%
_{i}^{n,u,K}}\right) \\
& \quad =\dint_{\mathbb{R}^{d}}\left\langle y,\frac{1}{a(n)\sqrt{n}}%
A_{K}^{-1/2}\left( X^{0}\left( s_{i}^{n}\right) \right) u_{K}\left(
s_{i}^{n}\right) \right\rangle \eta _{i}^{n,u,K}(dy) \\
& \qquad -H_{c}\left( \bar{X}_{i}^{n,u,K},\frac{1}{a(n)\sqrt{n}}%
A_{K}^{-1/2}\left( X^{0}\left( s_{i}^{n}\right) \right) u_{K}\left(
s_{i}^{n}\right) \right) \\
& \quad \leq \left\langle \frac{1}{a(n)\sqrt{n}}A\left( \bar{X}%
_{i}^{n,u,K}\right) A_{K}^{-1/2}\left( X^{0}\left( s_{i}^{n}\right) \right)
u_{K}\left( s_{i}^{n}\right) ,\right. \\
& \qquad \qquad \qquad \left. \frac{1}{a(n)\sqrt{n}}A_{K}^{-1/2}\left(
X^{0}\left( s_{i}^{n}\right) \right) u_{K}\left( s_{i}^{n}\right)
\right\rangle \\
& \qquad -\frac{1}{2}\left\langle \frac{1}{a(n)\sqrt{n}}A\left( \bar{X}%
_{i}^{n,u,K}\right) A_{K}^{-1/2}\left( X^{0}\left( s_{i}^{n}\right) \right)
u_{K}\left( s_{i}^{n}\right) ,\right. \\
& \qquad \qquad \qquad \left. \frac{1}{a(n)\sqrt{n}}A_{K}^{-1/2}\left(
X^{0}\left( s_{i}^{n}\right) \right) u_{K}\left( s_{i}^{n}\right)
\right\rangle +\frac{2}{a(n)^{3}n^{3/2}}K_{DA}K^{6} \\
& \quad =\frac{1}{2a(n)^{2}n}\left\Vert A_{K}^{-1/2}\left( X^{0}\left(
s_{i}^{n}\right) \right) u_{K}\left( s_{i}^{n}\right) \right\Vert _{A(\bar{X}%
_{i}^{n,u,K})}^{2}+\frac{2}{a(n)^{3}n^{3/2}}K_{DA}K^{6}.
\end{align*}%
Consequently%
\begin{align}
& \limsup_{n\rightarrow \infty }a^{2}(n)nE\left[ \frac{1}{n}%
\dsum\limits_{i=0}^{n-1}R\left( \left. \eta _{i}^{n,u,K}\right\Vert \mu _{%
\bar{X}_{i}^{n,u,K}}\right) \right]  \label{ineq:GaussUpper} \\
& \quad \leq \limsup_{n\rightarrow \infty }\frac{1}{2}E\left[ \frac{1}{n}%
\dsum\limits_{i=0}^{n-1}\left\Vert A_{K}^{-1/2}\left( X^{0}\left(
s_{i}^{n}\right) \right) u_{K}\left( s_{i}^{n}\right) \right\Vert _{A(\bar{X}%
_{i}^{n,u,K})}^{2}\right] ,  \notag
\end{align}%
where in fact 
\begin{equation*}
\limsup_{n\rightarrow \infty }\frac{1}{2}E\left[ \frac{1}{n}%
\dsum\limits_{i=0}^{n-1}\left\Vert A_{K}^{-1/2}\left( X^{0}\left(
s_{i}^{n}\right) \right) u_{K}\left( s_{i}^{n}\right) \right\Vert _{A(\bar{X}%
_{i}^{n,u,K})}^{2}\right] \leq \frac{1}{2}K^{4}K_{A}\text{.}
\end{equation*}%
Therefore (\ref{cond:relEndBnd}) is satisfied by $\{\eta ^{n,u,K}\}$, so we
can apply Theorem \ref{th:Tightness} and choose a subsequence (keeping $n$
as the index for convenience) along which $\{(\hat{\eta}^{n,u,K},\bar{Y}%
^{n,u,K})\}$ converges weakly to some limit $(\hat{\eta}^{u,K},\hat{Y}%
^{u,K}) $, where $\hat{\eta}_{2}^{u,K}$ is Lebesgue measure and%
\begin{equation*}
\hat{Y}^{u,K}(t)=\int_{0}^{t}Db(X^{0}(s))\hat{Y}^{u,K}(s)ds+\int_{0}^{t}%
\int_{\mathbb{R}^{d}}y\hat{\eta}_{\left. 1\right\vert 2}^{u,K}(dy\left\vert
s\right. )ds\text{.}
\end{equation*}%
This implies 
\begin{equation}
\sup_{t\in \lbrack 0,1]}\left\Vert \bar{X}^{n,u,K}(t)-X^{0}(t)\right\Vert
\rightarrow 0  \label{XConverge}
\end{equation}%
in probability. \ Because of this, the uniform bound on $A^{1/2}(x)$ and the
continuity of $A^{1/2}(x)$, we have (recall that $s^{n}(t)\doteq
\left\lfloor nt\right\rfloor /n$) 
\begin{equation*}
\sup_{t\in \lbrack 0,1]}\left\Vert A^{1/2}(\bar{X}%
^{n,u,K}(s^{n}(t)))-A^{1/2}(X^{0}(s^{n}(t)))\right\Vert \rightarrow 0
\end{equation*}%
in probability. \ However, the continuity of $%
A^{1/2}(X^{0})A_{K}^{-1/2}(X^{0})u_{K}$ gives%
\begin{align*}
& \sup_{t\in \lbrack 0,1]}\left\Vert
A^{1/2}(X^{0}(s^{n}(t)))A_{K}^{-1/2}(X^{0}(s^{n}(t)))u_{K}(s^{n}(t))\right.
\\
& \qquad \left. -A^{1/2}(X^{0}(t))A_{K}^{-1/2}(X^{0}(t))u_{K}(t)\right\Vert
\rightarrow 0\text{.}
\end{align*}%
Combining these limits, and using the fact that $A_{K}^{-1/2}(X^{0})u_{K}$
is uniformly bounded, shows that%
\begin{align}
& \sup_{t\in \lbrack 0,1]}\left\Vert A^{1/2}(\bar{X}%
^{n,u,K}(s^{n}(t)))A_{K}^{-1/2}(X^{0}(s^{n}(t)))u_{K}(s^{n}(t))\right.
\label{eq:truncTiltConvProb} \\
& \qquad \left. -A^{1/2}(X^{0}(t))A_{K}^{-1/2}(X^{0}(t))u_{K}(t)\right\Vert
\rightarrow 0  \notag
\end{align}%
in probability. \ This combined with the uniform bounds allows us to use
dominated convergence to get \ 
\begin{align*}
& \limsup_{n\rightarrow \infty }E\left[ \frac{1}{2}\int_{0}^{1}\left\Vert
A_{K}^{-1/2}(X^{0}(s^{n}(t)))u_{K}(s^{n}(t))\right\Vert _{A(\bar{X}%
^{n,u,K}(s^{n}(t)))}^{2}dt\right] \\
& \quad =\frac{1}{2}\int_{0}^{1}\left\Vert
A_{K}^{-1/2}(X^{0}(t))u_{K}(t)\right\Vert _{A(X^{0}(t))}^{2}dt.
\end{align*}%
Combining this with (\ref{ineq:GaussUpper}) shows (\ref{eq:REbound2}).

To prove (\ref{eq:convofY}) we will show that in fact 
\begin{equation*}
\hat{\eta}^{u,K}(dy\otimes dt)=\delta
_{A(X^{0}(t))A_{K}^{-1/2}(X^{0}(t))u_{K}(t)}(dy)dt\text{.}
\end{equation*}%
For all $\sigma >0$ let 
\begin{equation*}
G_{\sigma }=\left\{ (z,t)\in \mathbb{R}^{d}\times \lbrack 0,1]:\left\Vert
z-A(X^{0}(t))A_{K}^{-1/2}(X^{0}(t))u_{K}(t)\right\Vert \leq \sigma \right\} ,
\end{equation*}%
and note that by weak convergence $\limsup_{n\rightarrow \infty }E[\hat{\eta}%
^{n,u,K}(G_{\sigma })]\leq E[\hat{\eta}^{u,K}(G_{\sigma })]$. Note also that%
\begin{align*}
& E[\hat{\eta}^{n,u,K}(G_{\sigma })] \\
& \quad \geq P\left[ \sup_{t\in \lbrack 0,1]}\left\Vert a(n)\sqrt{n}\int_{%
\mathbb{R}^{d}}y\eta _{\left\lfloor nt\right\rfloor
}^{n,u,K}(dy)-A(X^{0}(t))A_{K}^{-1/2}(X^{0}(t))u_{K}(t)\right\Vert \leq
\sigma \right] \text{.}
\end{align*}%
However, by (\ref{Derivative Bound}) we can choose $n$ large enough to make 
\begin{align*}
& \sup_{t\in \lbrack 0,1]}\left\Vert a(n)\sqrt{n}\int_{\mathbb{R}^{d}}y\eta
_{\left\lfloor nt\right\rfloor }^{n,u,K}(dy)\right. \\
& \qquad \quad \left. -A\left( \bar{X}^{n,u,K}\left( s^{n}(t)\right) \right)
A_{K}^{-1/2}\left( X^{0}\left( s^{n}(t)\right) \right) u_{K}\left(
s^{n}(t)\right) \rule{0pt}{12pt}\rule{0pt}{14pt}\right\Vert
\end{align*}%
arbitrarily small, and the proof that 
\begin{align*}
& \sup_{t\in \lbrack 0,1]}\left\Vert A(\bar{X}%
^{n,u,K}(s^{n}(t)))A_{K}^{-1/2}(X^{0}(s^{n}(t)))u_{K}(s^{n}(t))\right. \\
& \qquad \quad \left. -A(X^{0}(t))A_{K}^{-1/2}(X^{0}(t))u_{K}(t)\right\Vert
\rightarrow 0
\end{align*}%
in probability is identical to the proof of (\ref{eq:truncTiltConvProb}). \
Therefore $\limsup_{n\rightarrow \infty }E[\hat{\eta}^{u,K,n}(G_{\sigma
})]=1 $ for all $\sigma >0$, and so $E[\hat{\eta}^{u,K}(\cap _{n\in \mathbb{N%
}}G_{1/n})]=1$. This implies that with probability 1 
\begin{equation*}
\hat{\eta}_{\left. 1\right\vert 2}^{u,K}(\left. dy\right\vert t)=\delta
_{A(X^{0}(t))A_{K}^{-1/2}(X^{0}(t))u_{K}}(dy)
\end{equation*}%
for a.e. $t$. It follows that 
\begin{equation*}
\hat{Y}^{u,K}(t)=\int_{0}^{t}Db(X^{0}(s))\hat{Y}^{u,K}(s)ds+%
\int_{0}^{t}A(X^{0}(s))A_{K}^{-1/2}(X^{0}(s))u_{K}(s)ds,
\end{equation*}%
and therefore $\bar{Y}^{n,u,K}\rightarrow \phi ^{u,K}$ weakly. This implies (%
\ref{eq:convofY}) and completes the proof.
\end{proof}

\medskip The second theorem in this section allows us to approximate $F(\phi
^{A^{1/2}(X^{0})u})$ by $F(\phi ^{u,K})$ and $\frac{1}{2}\int_{0}^{1}\left%
\Vert u(s)\right\Vert ^{2}ds$ by 
\begin{equation*}
\frac{1}{2}\int_{0}^{1}\left\Vert A_{K}^{-1/2}(X^{0}(s))u_{K}(s)\right\Vert
_{A(X^{0}(s))}^{2}ds\text{.}
\end{equation*}

\begin{theorem}
\label{th:ConvPhiKToPhi}Let $u\in C([0,1]:\mathbb{R}^{d})$ and define $\phi
^{A_{K}^{1/2}(X^{0})u}$ by (\ref{phiDefined}) and $\phi ^{u,K}$ by (\ref%
{eq:PhiuK}). \ Then 
\begin{equation*}
\phi ^{u,K}\rightarrow \phi ^{A^{1/2}(X^{0})u}
\end{equation*}%
in $C([0,1]:\mathbb{R}^{d})$ and 
\begin{equation*}
\frac{1}{2}\int_{0}^{1}\left\Vert A_{K}^{-1/2}(X^{0}(s))u_{K}(s)\right\Vert
_{A(X^{0}(s))}^{2}ds\rightarrow \frac{1}{2}\int_{0}^{1}\left\Vert
u(s)\right\Vert ^{2}ds\text{.}
\end{equation*}
\end{theorem}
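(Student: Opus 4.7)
The plan is to reduce both convergences to dominated convergence applied pointwise in $s$, with uniform path convergence then following from the Gronwall argument underlying (\ref{eq:L2ContPhi}) already proved in the excerpt. Since $u\in C([0,1]:\mathbb{R}^d)$ is bounded, for every $K>\|u\|_\infty$ we have $u_K(s)=u(s)$ for all $s$, so both $K\to\infty$ limits reduce to analyzing $A(X^0(s))A_K^{-1/2}(X^0(s))u(s)$ and $\|A_K^{-1/2}(X^0(s))u(s)\|_{A(X^0(s))}^2$. I would handle these by diagonalizing $A(X^0(s))=Q(s)\Lambda(s)Q(s)^T$ and working coordinatewise in the eigenbasis.

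For the pointwise step, on any coordinate with $\Lambda_{ii}(s)>0$ one has $\Lambda_{ii,K}^{-1/2}(s)=\Lambda_{ii}^{-1/2}(s)$ once $K\ge\Lambda_{ii}^{-1/2}(s)$, so $\Lambda_{ii}(s)\Lambda_{ii,K}^{-1/2}(s)\to\Lambda_{ii}^{1/2}(s)$; on a coordinate with $\Lambda_{ii}(s)=0$ the quantity $\Lambda_{ii}(s)\Lambda_{ii,K}^{-1/2}(s)$ vanishes identically, as does the corresponding coordinate of $A^{1/2}(X^0(s))u(s)$. Reassembling the coordinates yields $A(X^0(s))A_K^{-1/2}(X^0(s))u(s)\to A^{1/2}(X^0(s))u(s)$ pointwise, and an analogous eigenbasis computation gives $\|A_K^{-1/2}(X^0(s))u(s)\|_{A(X^0(s))}^2\to\|u(s)\|^2$ pointwise in $s$.

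For uniform domination, the elementary inequality $\Lambda K\le\Lambda^{1/2}$ valid whenever $\Lambda<1/K^2$, combined with $\Lambda_{ii}(s)\le K_A$, yields $\|A(X^0(s))A_K^{-1/2}(X^0(s))\|_{\mathrm{op}}\le K_A^{1/2}$ uniformly in $s$ and $K$; hence both integrands are bounded by constants times $\|u\|_\infty^2$. Dominated convergence then delivers the two integral limits, and applying the Gronwall argument behind (\ref{eq:L2ContPhi}) to the difference of the driving terms $A(X^0)A_K^{-1/2}(X^0)u_K$ and $A^{1/2}(X^0)u$ upgrades $L^2$ convergence of the drivers to uniform convergence of paths $\phi^{u,K}\to\phi^{A^{1/2}(X^0)u}$. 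The only real subtlety is the degeneracy of $A$, which is precisely what the truncation $A_K^{-1/2}$ is engineered to handle; once the eigenbasis bookkeeping is carried out the kernel directions cause no trouble, since $A(X^0)A_K^{-1/2}(X^0)$ annihilates them exactly as $A^{1/2}(X^0)$ does.
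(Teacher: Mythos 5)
Your approach matches the paper's: pointwise convergence of the integrands plus uniform domination, dominated convergence, and the Lipschitz estimate behind (\ref{eq:L2ContPhi}) to upgrade $L^2$ convergence of the drivers to uniform convergence of the paths, with the useful simplification that $u_K=u$ once $K>\sup_s\Vert u(s)\Vert$ because $u$ is continuous on $[0,1]$.

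However, your careful eigenbasis bookkeeping for the drift does not give what you say when applied to the cost. Writing $v(s)=Q^T(s)u(s)$, one has $\Vert A_K^{-1/2}(X^0(s))u(s)\Vert_{A(X^0(s))}^2=\sum_i\Lambda_{ii,K}^{-1}(s)\Lambda_{ii}(s)v_i(s)^2$. On a coordinate with $\Lambda_{ii}(s)>0$ the factor tends to $1$ as $K\to\infty$; but on a kernel coordinate the factor is $K^2\cdot 0=0$ for every $K$, whereas that coordinate contributes $v_i(s)^2$ to $\Vert u(s)\Vert^2$. Hence the pointwise limit is the squared norm of the projection of $u(s)$ onto $\mathrm{range}(A(X^0(s)))$, which is strictly less than $\Vert u(s)\Vert^2$ whenever $u(s)$ has a nonzero component in $\ker A(X^0(s))$. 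Your drift computation is unaffected, since kernel coordinates give zero on both sides there. For what it is worth, the same overstatement appears in the paper's proof, via the claim (\ref{eq:le}) that $A^{1/2}(X^0)A_K^{-1/2}(X^0)u_K\to u$ pointwise: in the eigenbasis that product has diagonal entries $\Lambda_{ii}^{1/2}\Lambda_{ii,K}^{-1/2}$, which on kernel coordinates equal $0\cdot K=0$, so the limit is again the projection of $u$ onto $\mathrm{range}(A(X^0))$. The downstream proof of (\ref{eq:last}) is unaffected, because it needs only the inequality $\frac{1}{2}\int_0^1\Vert A_K^{-1/2}(X^0(s))u_K(s)\Vert_{A(X^0(s))}^2 ds\le\frac{1}{2}\int_0^1\Vert u(s)\Vert^2 ds$ (true for every $K$) together with $\phi^{u,K}\to\phi^{A^{1/2}(X^0)u}$ (true, since $A^{1/2}$ also annihilates kernel directions); but the equality you and the paper both assert in the second display of the theorem holds only if $u(s)\in\mathrm{range}(A(X^0(s)))$ for a.e.\ $s$.
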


\begin{proof}
Note that 
\begin{equation*}
\left\Vert A^{1/2}(X^{0}(s))A_{K}^{-1/2}(X^{0}(s))u_{K}(s)\right\Vert \leq
\left\Vert u(s)\right\Vert
\end{equation*}%
for all $s\in \lbrack 0,1]$ and 
\begin{equation}
A^{1/2}(X^{0}(s))A_{K}^{-1/2}(X^{0}(s))u_{K}(s)\rightarrow u(s)
\label{eq:le}
\end{equation}%
pointwise. Since $u\in L^{2}([0,1]:\mathbb{R}^{d})$, by dominated
convergence (\ref{eq:le}) also holds in $L^{2}([0,1]:\mathbb{R}^{d})$. This
gives 
\begin{equation*}
\frac{1}{2}\int_{0}^{1}\left\Vert A_{K}^{-1/2}(X^{0}(s))u_{K}(s)\right\Vert
_{A(X^{0}(s))}^{2}ds\rightarrow \frac{1}{2}\int_{0}^{1}\left\Vert
u(s)\right\Vert ^{2}ds\text{.}
\end{equation*}%
Combining this with (\ref{eq:L2ContPhi}) shows that 
\begin{equation*}
\phi ^{u,K}\rightarrow \phi ^{A^{1/2}(X^{0})u}
\end{equation*}%
in $C([0,1]:\mathbb{R}^{d})$.
\end{proof}

\medskip \medskip Using (\ref{eq:rep}) and the fact that any given control
is suboptimal, 
\begin{align*}
& -a(n)^{2}\log E\left[ e^{-\frac{1}{a(n)^{2}}F(Y^{n})}\right] \\
& \quad \leq E\left[ \dsum\limits_{i=0}^{n-1}a(n)^{2}R\left( \left. \eta
_{i}^{n,u,K}\right\Vert \mu _{\bar{X}_{i}^{n,u,K}}\right) +F(\bar{Y}^{n,u,K})%
\right] \text{.}
\end{align*}%
Using Theorem \ref{th:ConvToPhiK}, this implies 
\begin{align*}
& \limsup_{n\rightarrow \infty }-a(n)^{2}\log E\left[ e^{-\frac{1}{a(n)^{2}}%
F(Y^{n})}\right] \\
& \quad \leq \frac{1}{2}\int_{0}^{1}\left\Vert
A_{K}^{-1/2}(X^{0}(s))u_{K}(s)\right\Vert _{A(X^{0}(s))}^{2}ds+F(\phi ^{u,K})%
\text{.}
\end{align*}%
Sending $K\rightarrow \infty $ and using Theorem \ref{th:ConvPhiKToPhi}
gives (\ref{eq:last}), and hence completes the proof of the lower bound (\ref%
{eq: Laplace Upper}).


\end{document}